\numberwithin{equation}{section}
\newtheorem{theo}{Theorem}[section]
\newtheorem{prop}[theo]{Proposition}
\newtheorem{lemma}[theo]{Lemma}
\newtheorem{assum}[theo]{Assumption}
\newtheorem{defn}[theo]{Definition}
\newtheorem{remark}[theo]{Remark}
\newenvironment{proof}[1][Proof]{\textbf{#1.} }{\ \rule{0.5em}{0.5em}}
\begin{document}

\title{Parallel queues with synchronization}

\author{Mariana Olvera-Cravioto \\  {\small Columbia University}
\and
Octavio Ruiz-Lacedelli \\  {\small Columbia University}
}

\maketitle

\begin{abstract}
Motivated by the growing interest in today's massive parallel computing capabilities we analyze a queueing network with many servers in parallel to which jobs arrive a according to a Poisson process. Each job, upon arrival, is split into several pieces which are randomly routed to specific servers in the network, without centralized information about the status of the servers' individual queues. The main feature of this system is that the different pieces of a job must initiate their service in a synchronized fashion. Moreover, the system operates in a FCFS basis. The synchronization and service discipline create blocking and idleness among the servers, which is compensated by the fast service time attained through the parallelization of the work.  We analyze the stationary waiting time distribution of jobs under a many servers limit and provide exact tail asymptotics; these asymptotics generalize the celebrated Cram\'er-Lundberg approximation for the single-server queue.  

\vspace{5mm}

\noindent {\em Kewywords:} Queueing networks with synchronization, many servers queue, Cram\'er-Lundberg approximation, high-order Lindley equation, cloud computing, MapReduce, weighted branching processes, stochastic fixed-point equations, large deviations. 

\noindent {\em 2000 MSC:} Primary: 60K25, 60J85. Secondary: 68M20, 68M14.

\end{abstract}

\section{Introduction}

Cloud computing is an emerging paradigm for accessing shared computing and storage resources over the internet.  ``Clouds" consist of hundreds of thousands of servers that provide scalable, on-demand data storage and processing capacity to end users.  Operators of these large facilities achieve economies of scale and can make efficient use of their computing infrastructure by optimizing how tasks are processed across an interconnected and distributed computer network.  End users of cloud computing benefit from reduced capital expenditures and from the flexibility that the scalable paradigm offers.  Worldwide demand for cloud computing has experienced rapid growth that is expected to continue into the foreseeable future.

Motivated by this rapidly emerging phenomenon, we analyze a queueing model for a large network of parallel servers.  Throughout the paper we use the generic term server to  represent a computing unit, e.g., a computer or a processor in the network. Jobs arrive to the network at random times and are split at the time of arrival into a number of pieces. These pieces are then immediately assigned to randomly selected servers, where they join the corresponding queues. The main reason for assigning these job fragments to specific servers is that maintaining a single centralized queue is not scalable in systems of this size, and keeping information about the individual status of each queue to make informed routing decisions can be costly. In systems where data locality is important, or where certain tasks need to be done at specific servers, the random routing can be used to approximately model storage, or specific resources, that are randomly spread out through the network.  The service requirements of each of the pieces of a job are allowed to be random and possibly dependent. The main distinctive features of this model are: 1) all the pieces of a job must begin their service at the same time, i.e., in a synchronized fashion; 2) jobs are processed in a first-come-first-serve (FCFS) basis, i.e., each of the individual queues at the servers follow a FCFS service discipline. We refer to these two characteristics as the {\em synchronization} and {\em fairness} requirements. Figure \ref{F.Cloud} depicts our model. 

The fairness requirement is common in many queueing systems where jobs originate from different users, while the job synchronization is a distinctive characteristic of this model that allows us to incorporate the need to exchange information among the different pieces of a job during their processing. This is certainly the case for many scientific applications that involve simulations of complex systems, including: wireless networks, neuronal networks, bio-molecular and 
biological systems. Alternatively, our model also provides an approximation for systems where job fragments need to be joined after being processed, which will be discussed in detail in Section \ref{S.MapReduce}. We point out that the synchronization of the different pieces of a job can in principle be attained without the need of centralized information, since the specific server assignment occurs upon arrival, and each piece would only need to keep track of its ``sibling" fragments, e.g., once a fragment is ready to initiate its service it can notify its siblings, and the last one to do so determines when the job can start processing.

The fairness and synchronization requirements create, nonetheless, blocking and idleness that are not present in other distributed systems, e.g., multi server queues where the different fragments of a job can be processed independently, and can therefore be thought of as batch arrivals. This lack of efficiency is compensated by the service  speed attained through the parallel processing, which can be considerable for very large jobs. To illustrate this gain in processing speed we have compared in Section \ref{S.Numerics} (Table \ref{T.CompareThree})  the sojourn times of jobs in our model and in a comparable multiserver queue where jobs are not split into pieces. A detailed discussion about this comparison can be found in Section~\ref{S.Numerics}, but for now it suffices to mention that our model dramatically outperforms the non-distributed system in spite of the blocking and suboptimal routing.

\begin{figure}[h] 
\begin{center}
\includegraphics[scale=0.7, bb = 50 480 550 750, clip]{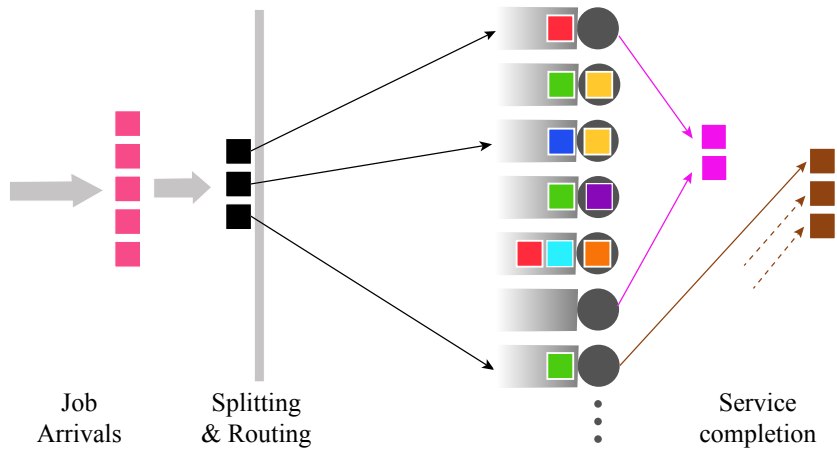}
\caption{Queueing model for a server cloud where jobs need to be synchronized. In this figure, the yellow, purple and orange jobs are being processed, while the pink and brown jobs have completed their service; two of the three brown pieces were processed at servers not depicted in the diagram. Note that the last server at the bottom will remain idle until the yellow and purple jobs complete their service, and the first server at the top will need to wait for both the orange and light blue jobs to be done before starting to process the red job.  The blue job in queue at the third server from the top has only one piece and can begin its processing as soon as the yellow job leaves.} \label{F.Cloud}
\end{center}
\end{figure}

We analyze in this paper the stationary waiting time of jobs (excluding service) in an asymptotic regime where the arrival rate of jobs and the number of servers grow to infinity, but the sizes of jobs and the service requirements of the individual fragments remain constant. For simplicity, we will refer to this type of limit as a ``many server asymptotic regime", not to be confused with the Halfin-Whitt regime used in multiserver queueing systems (\cite{Whitt_2002}).  In particular, after establishing sufficient conditions for the stability of the finite system, we show that the limiting stationary waiting time $W$ is given by the endogenous solution to the following high-order Lindley equation: 
\begin{equation} \label{eq:AdditiveMax}
W \stackrel{\mathcal{D}}{=} \left(  \max_{1 \leq i \leq N} \left( W_i + \chi_i - \tau_i \right) \right)^+,
\end{equation}
where the $\{W_i\}_{i \in \mathbb{N}}$  are i.i.d. copies of $W$, independent of $(N, \chi_1,\tau_1, \chi_2, \tau_2, \dots)$, $N$ is the number of pieces of a job (allowed to be random), $\tau_i$ is the limiting inter arrival time between piece $i$ and the job immediately in front of it at its assigned queue, and $\chi_i$ is the service time of the fragment of the job in front of the $i$th piece; $\stackrel{\mathcal{D}}{=}$ stands for equality in distribution and $x^+ = \max\{0, x\}$. Note that for $N \equiv 1$, \eqref{eq:AdditiveMax} reduces to the classical Lindley equation, satisfied by the GI/GI/1 queue.
Recursion \eqref{eq:AdditiveMax} was termed ``high-order Lindley equation" and studied in the context of queues with synchronization in \cite{Kar_Kel_Suh_94}, although only for deterministic $N$.

Moreover, by applying the main result in \cite{Jel_Olv_14} for the maximum of the branching random walk,  we provide the exact asymptotics for the tail distribution of $W$. To explain the significance of this result it is worth considering first a single-server queue with renewal arrivals and i.i.d.~service requirements, for which it is well known that the stationary waiting time distribution, $W^\text{GI/GI/1}$, satisfies
$$P\left( W^{GI/GI/1} > x \right) = P\left( \max_{k \geq 1} \, S_k > x \right),$$
where $S_k = X_1 + \dots + X_k$ is a random walk with i.i.d.~increments satisfying $E[X_1] < 0$. Using the ladder heights of $\{S_k\}$ and renewal theory yields the celebrated Cram\'er-Lundberg approximation
$$P\left( W^\text{GI/GI/1} > x \right) \sim H_\text{GI/GI/1} \, e^{-\theta x}, \qquad x \to \infty,$$
where $0 < H_\text{GI/GI/1} < \infty$ is a constant that can be written in terms of the limiting excess of the renewal process defined by the ladder heights, and $\theta > 0$, known as the Cram\'er-Lundberg root, solves $E[e^{\theta X_1}] = 1$ and satisfies $0 < E[X_1 e^{\theta X_1}] < \infty$ (see \cite{Asm2003}, Chapter XIII, for more details). Throughout the paper, $f(x) \sim g(x)$ as $x \to \infty$ stands for $\lim_{x \to \infty} f(x)/g(x) = 1$.

Back to the asymptotic behavior of $W$ in this paper, Theorem 3.4 in \cite{Jel_Olv_14} states that for $\theta > 0$ satisfying the conditions  
\[
E\left[ \sum_{i=1}^N e^{\theta (\chi_i- \tau_i)} \right]=1 \qquad \text{and} \qquad  0<E\left[\sum_{i=1}^N (\chi_i - \tau_i) e^{\theta (\chi_i - \tau_i)} \right]<\infty,
\]
we have that  
\begin{equation}
\label{eq:genCL}
P(W>x) \sim H e^{-\theta x}
\end{equation}
as $x\rightarrow \infty$ for some constant $0<H<\infty$. In other words, the queueing system with parallel servers and synchronization requirements in this paper naturally generalizes the single-server queue, as well as its Cram\'er-Lundberg approximation.

\subsection{Split-merge queues and MapReduce} \label{S.MapReduce}

As mentioned earlier, our model depicts a queueing network with many parallel servers where incoming jobs consist of a random number of pieces, to be processed in parallel by a randomly chosen subset of servers, under the constraint that all the pieces must begin their service simultaneously. This setup is very closely related to a queueing network known as a {\em split-merge} queue  (\cite{Harr_Zer_03, Leb_Knott_07, Leb_Din_Knott_08, Tsi_Knott_Harr_14}). The main difference between a split-merge queue and the model described in this paper is that in the former the synchronization occurs once all the pieces of a job have completed their service. More precisely, job fragments are allowed to start their processing as soon as their assigned server becomes available, but will continue blocking it, even after having completed their service, until all other pieces of the same job have completed theirs. Split-and-merge queues have been used for the analysis of ``redundant arrays of independent disks (RAID)" in \cite{Harr_Zer_03, Leb_Din_Knott_08}, and constitute a special case of a fork-join network where there are no output buffers. Hence, split-merge queues provide natural upper bounds for their corresponding fork-join counterparts. One important feature of our model compared to split-merge queues, or even the more general fork-join networks, is that much of the existing literature assumes that all jobs have the same number of pieces, which is equal to the number of servers in the network (usually small). Our model, which is meant to model networks with a very large number of parallel servers, allows jobs to have different number of pieces with heterogeneous service requirements. 

A popular distributed algorithm that can be modeled using a split-merge queue is MapReduce (\cite{Dean_Ghem_04}), and its open source implementation, Hadoop. The main idea of MapReduce/Hadoop is to divide large data sets into smaller units, then process these smaller units on a large number of parallel servers and finally assemble  the partial answers into the final solution.  The initial phase of this framework, called the mapping, divides a new job into tasks/files of similar size, e.g., 64 (or 128) megabytes (MB) in size. Irrespective of the size of the original job, all smaller tasks are of equal size - 64 (or 128) MBs, but the number of these tasks and the servers to which they are assigned depends on the original job size. After (or in some implementations during) the execution of the mapping phase the system begins a shuffling phase, which is then followed by a reducing phase, again on a number of parallel servers. The reduce phase merges the partial answers from the processed tasks into  one final answer.  Often, the completion of a job consists of several repetitions in sequence of the map-reduce process. 

To explain the similarities between a split-merge queue and MapReduce it is worth elaborating some more on its synchronization and blocking characteristics.  In the original FCFS implementation, if there are two jobs A and B arriving in that order, all mapping tasks for job A will execute before any mapping tasks from job B can begin. As tasks from job A are finishing their mapping phase and moving on to the reduce phase, job B mapping tasks are scheduled.  This is similar to the synchronized server assignment in a split-merge queue. 
Furthermore, due to the observation that reducers need to have all the outputs from the mapping phase in order to perform their work, job B reducers cannot start until all job A reducers and all job B mappers have finished.  
Therefore, there is a natural queueing system with blocking of servers.  In the Hadoop framework, a job needs both mappers and reducers available in order to start processing, and the reducers can only begin once all (some) mappers have finished, hence blocking servers for other jobs that arrived at a later time. The blocking, or starvation, problem is a known shortcoming of MapReduce (see, e.g., \cite{Tan_Meng_Zhang_12}). 

It follows that a MapReduce implementation with FCFS scheduling can be well described by a split-merge queue with synchronization whenever the servers remain blocked until all the subtasks are rejoined. Section~\ref{S.Numerics} shows numerical evidence supporting the use of our model for approximating a split-and-merge queue. The main advantage of using our model for this purpose is its mathematical tractability, since as our main results show, the waiting time distribution as well as all its moments can be accurately estimated when the number of servers is large. Our model can also incorporate more realistic features such as jobs of different sizes and dependent, non-identically distributed service requirements for their subtasks.

\subsection{Related Literature} \label{S.Literature}

The model studied in this paper has connections to a vast literature, and therefore we restrict this section to only a few closely related models and some recent work on the applications mentioned above.

\begin{enumerate}
\item {\bf Distributed queueing models with synchronized service.} We start with the model considered in \cite{Green_80}, which studies a queueing system where each job requires a synchronous execution on a random number of parallel servers. Some applications mentioned there are: the deployment of fire engines in firefighting, jury selection, and the staffing of surgeons and medical personnel in emergency surgery. The main difference in \cite{Green_80} from our setup, besides the restriction to i.i.d. exponentially distributed service times, is that we assign the pieces of a job to specific servers at the time of arrival, while the model in \cite{Green_80} waits until the required number of servers is free and then assigns the pieces to these servers.  We point out that this is equivalent to having perfect information about the workloads at each server and routing the pieces to the servers with the smallest workloads. Therefore, the model in \cite{Green_80} provides a benchmark that can be used to quantify the value of having centralized information.  

Some of the ideas used in the proof of the main result in this paper are borrowed from \cite{Kar_Kel_Suh_94}, where the authors considered a queueing system with $m$ different types of servers and $n$ identical servers of each type (for a total of $m\times n$ servers), and where each arriving job requires service from exactly one server of each type, i.e., each job needs $m$ parallel servers, and is assigned upon arrival to one of the $n$ possible choices for each type. Theorem~2 in \cite{Kar_Kel_Suh_94} shows that the steady-state distribution of the waiting time converges weakly, as $n \to \infty$, to the endogenous solution of the high-order Lindley equation \eqref{eq:AdditiveMax} with $N \equiv m$. Besides allowing $N$ to be random and the service requirements of the fragments of a job to be dependent, this paper shows not only the weak convergence of the steady-state waiting time, but also of all its moments. The proof technique used in this paper, which is based on a coupling using the Wasserstein distance, is new, and is responsible for the stronger mode of convergence.

A third related model is the one considered in \cite{Bacc_Foss_11}, which can be thought of as a stylized version of our queueing network were server assignment is not done uniformly at random, but rather according to a distribution on specific subsets of servers, e.g., blocks of adjacent or closely located servers. The setting there corresponds to all the fragments of a job having identical service requirements, but provides interesting insights into the existence of stationary distributions for different server assignment rules.

\item {\bf Fork-join queues with synchronization.} The analysis of distributed computer systems with synchronization constraints such as MapReduce/Hadoop, RAID, or even online retail, constitutes an active area of research. From a queueing theory perspective, perhaps the most widely used model is that of a fork-and-join queue (\cite{Harr_Zer_03, Leb_Knott_07, Tsi_Knott_Harr_14, Thomasian_14}), which as pointed out earlier differs from our model in two ways: 1) the synchronization occurs once all the pieces of a job have completed their service, and 2) there may be output buffers after each queue to prevent the servers from being blocked. Moreover, fork-join queues are in general difficult to analyze, with much of the literature focusing on approximating mean sojourn times of jobs (\cite{Nel_Tan_88, Bacc_Mak_Shw_89, Var_Mak_94, Tho_Tan_94, Varki_99, Varki_01, Leb_Din_Knott_08}). Exact analytical results exist only for a 2-server system with i.i.d.~exponential service requirements (\cite{Fla_Hah_84, Nel_Tan_88}), and most of the remaining approximations do not scale well as the number of servers grows large. Heavy-traffic approximations can be found in \cite{Varma_90, Nguyen_93}. 

More tangentially related, we also mention that a considerable amount of work related to the modeling of MapReduce and other distributed computer platforms is done from the scheduling perspective. We refer the interested reader to \cite{Tan_Meng_Zhang_12, Zah_etal_09, Lin_etal_13} and the references therein. 

\item {\bf Resource allocation problems.} Although only briefly, we mention that the model in this paper is also related to virtual path allocation problems in communication networks, where incoming calls request a specific set of links in the network (virtual path) to establish a communication channel, and if any of the requested links is unavailable at that time the call is lost (see, e.g., \cite{Whitt_85, Kelly_87}). In a queueing interpretation of this model, one can think of calls as jobs, links as servers, and the duration of the call as the common service requirement of the job fragments. Existing work in this area has been focused on the blocking-loss model, where the main performance measure is the loss probability. Hence, our model can be thought of as a variation of this model where queueing is allowed and the number of links is very large. More generally, our model can provide valuable insights into a wider class of multi-resource allocation problems, such as those appearing in service engineering and consulting project management, provided the number of resources is sufficiently large to justify the limiting regime.

\end{enumerate}

The remainder of the paper is organized as follows. Section~\ref{S.Model} contains the mathematical description of our queueing model; Section~\ref{S.MainAnalysis} describes the analysis of the stationary waiting time of jobs in the network, with the main result of this paper in Section~\ref{SS.MainResult} and the tail asymptotics of the limiting waiting time in Section~\ref{SS.TheLimit}. Section~\ref{S.Numerics} contains the numerical experiments mentioned earlier, and Section~\ref{S.Conclusions} gives some concluding remarks.  Finally, the proofs of the two theorems are given in Section~\ref{S.Proofs}.

\bigskip
\section{Model description} \label{S.Model}

We consider a sequence of queueing networks  indexed by their number of servers, $n$. Each of the $n$ servers are identical and operate in parallel.  Arrivals to the $n$th network occur according to a Poisson process with rate $\lambda n$ for some parameter $\lambda > 0$. Each job, upon arrival to the network, is split into a random number of pieces, usually proportional to the total service requirement of the job. The size of a job, i.e., the number of pieces into which it is split, is determined by some distribution $f_n(k)$, $k = 1, 2, \dots, m_n$, where $m_n$ is a bound on the number of pieces a job can have and is chosen to satisfy $m_n \leq n$; this condition ensures that each piece can be routed to a different server. Once a job has been split, say into $k$ pieces, its fragments are routed randomly to $k$ different servers in the network (i.e., with all $n!/(n-k)!$ possible assignments equally likely), forming a queue at their assigned servers.  An equivalent way of describing the arrival of jobs into the $n$th network is to use the thinning property of the Poisson process and think of independent Poisson processes, each generating jobs of size $k$, $k = 1, 2, \dots, m_n$, at rate $f_n(k)\lambda n.$

The service times of the different fragments of a job are assumed to have a general distribution, although the stability condition for the model will implicitly impose that they have finite exponential moments. Moreover, they are not assumed to be identically distributed and are allowed to be dependent, although we do require that they be independent of the number of pieces. More precisely, a typical job has $\hat N$ pieces having service requirements $(\chi^{(1)}, \dots, \chi^{(\hat N)})$, where $\hat N$ has distribution $f_n$, and  $\chi^{(j)}$ is independent of $\hat N$ for each $j$. Since the random routing  eliminates information about the order of the pieces, the relevant distribution that will appear in the analysis of the model is that of a randomly chosen fragment, which we denote $B$. The randomness of the fragments' service requirements can be used to include rounding effects and small variations on the type of processing that they need. The sizes of jobs and of the service requirements of their fragments are assumed to be independent of the arrival process. 

In order to model the synchronization and fairness characteristics of the network, we will assign to each job a tag (not to be confused with the label that will be introduced later). More precisely, a job having $k$ pieces receives a tag of the form $(s_1, s_2, \dots, s_k)$, $s_i \in \{1, 2, \dots, n\}$ for all $i$, $s_i \neq s_j$ for $i \neq j$, representing the different servers to which its fragments are sent for processing.

\begin{defn} \label{D.Predecessor}
We say that a job having tag ${\bf r} = (r_1, r_2, \dots, r_l)$ is a {\em predecessor} of a job having tag ${\bf s} = (s_1, s_2, \dots, s_k)$ if it arrived before the job having tag ${\bf s}$ and they have at least one server in common (i.e., $r_i =  s_j$ for some $1 \leq i \leq l$ and $1 \leq j \leq k$). We use the term {\em immediate predecessor} if there are no pieces of other jobs in between the two jobs at the server they have in common. 
\end{defn}

In terms of this definition, the {\em synchronization} rule is that the job having tag ${\bf s} = (s_1, s_2, \dots, s_k)$ cannot begin its service, which is to be done in parallel by servers $s_1, s_2, \dots, s_k$, until all its immediate predecessors have completed their service. The {\em fairness} rule says that if the job with tag ${\bf r}$ is a predecessor of the job with tag ${\bf s}$, then it will begin its service before the job with tag ${\bf s}$ does. 

Formally, we can think of the $n$th system as a superposition of $\sum_{k=1}^{m_n} \frac{n!}{(n-k)!}$ independent marked point processes. Each of these processes generates jobs of size $k$ with server assignments $(s_1, \dots, s_k)$, according to a Poisson process with rate $f_n(k) \lambda n/(n!/(n-k)!)$, and having i.i.d.~marks \linebreak $\left\{ (\chi_i^{(1)}, \dots, \chi_i^{(k)}) \right\}_{i \geq 1}$ corresponding to the service requirements of the fragments. To establish stability, we follow the standard queueing theory technique of assuming that at time $t_0 < 0$ there are no jobs in the network, and then look at the waiting time of the first job to arrive after time zero. We will refer to this job as the ``tagged" job, and we will prove that its waiting time, after having taken the limit $t_0 \to -\infty$, is finite almost almost surely. The stability will then follow from Loynes' lemma and Palm theory (see \cite{Bacc_Brem_2003} for more details on this general technique). 

To analyze the waiting time of the tagged job we look at a graph containing all the information of which jobs need to complete their service before the tagged job can initiate its own. We now describe how to construct such graph, called in \cite{Kar_Kel_Suh_94} a predecessor graph.

\subsection{The predecessor graph}

To construct the predecessor graph we look at time in reverse, starting from the time the tagged job arrived, say $T_1 \geq 0$, and ending at time $t_0$. The tagged job, which we will label $\emptyset$, is split into a random number of pieces, say $\hat N_\emptyset = \hat N_\emptyset (n)$, where $\hat N_\emptyset$ is distributed according to $f_n$. Each of these pieces will be routed to one of the $n$ servers in the network, where it will either find the server empty or join a queue. Suppose that the tagged job needs to be processed by servers $(s_1, \dots, s_{\hat N_\emptyset})$, and recall from Definition~\ref{D.Predecessor} that any job that is directly in front of the queue at any of the servers $s_i$, $1 \leq i \leq \hat N_\emptyset$ is an immediate predecessor of the tagged job. To construct the first set of edges in the graph we draw an edge from the tagged job to its immediate predecessors. Moreover, each edge is assigned a vector of the form $(\hat \tau_i, \chi_i)$, $1 \leq i \leq \hat N_{\emptyset}$, where $\hat \tau_{i}$ is the inter arrival time between the tagged job and its $i$th immediate predecessor, and $\chi_{i}$ is the service requirement of the piece of the immediate predecessor that is in front of the corresponding piece of the tagged job. Also, if a job is an immediate predecessor of more than one fragment of the tagged job, say it requires service at servers $s_i$ and $s_j$, then $\hat \tau_{i} = \hat \tau_{j}$, although we may still have $\chi_{i} \neq \chi_{j}$ with $\chi_i, \chi_j$ possibly dependent. Finally, if a piece of the tagged job finds its server empty upon arrival, then there is simply no edge to be drawn. Hence, the number of outbound edges of the tagged job is smaller or equal than $\hat N_\emptyset$.

Iteratively, once we have identified all the immediate predecessors of the tagged job we repeat the process described above with each one of them. We will call the predecessor graph $\mathcal{G}_n(t_0)$, since it will depend on both the number of servers $n$ and the time $t_0$ at which the system starts empty. Since $\mathcal{G}_n(t_0)$ will resemble a tree, it will be useful to use tree notation to refer to the predecessors of the tagged job. More precisely, let $\mathbb{N}_+ = \{1, 2, 3, \dots\}$ be the set of positive integers and let $U = \bigcup_{r=0}^\infty (\mathbb{N}_+)^r$ be the set of all finite sequences ${\bf i} = (i_1, i_2, \dots, i_r) \in U$, where by convention $\mathbb{N}_+^0 = \{ \emptyset\}$ contains the null sequence $\emptyset$. 
To ease the exposition, for a sequence ${\bf i} = (i_1, i_2, \dots, i_k) \in U$ we write ${\bf i}|t = (i_1, i_2, \dots, i_t)$, provided $k \geq t$, and  ${\bf i}|0 = \emptyset$ to denote the index truncation at level $t$, $k \geq 0$. 
To simplify the notation, for ${\bf i} \in \mathbb{N}_+$ we simply use ${\bf i} = i_1$, that is, without the parenthesis. Also, for ${\bf i} = (i_1, \dots, i_k)$ we will use $({\bf i}, j) = (i_1,\dots, i_k, j)$ to denote the index concatenation operation, if ${\bf i} = \emptyset$, then $({\bf i}, j) = j$. 

\begin{figure}[h]
\begin{center}
\includegraphics[scale=0.7, bb = 50 480 560 750, clip]{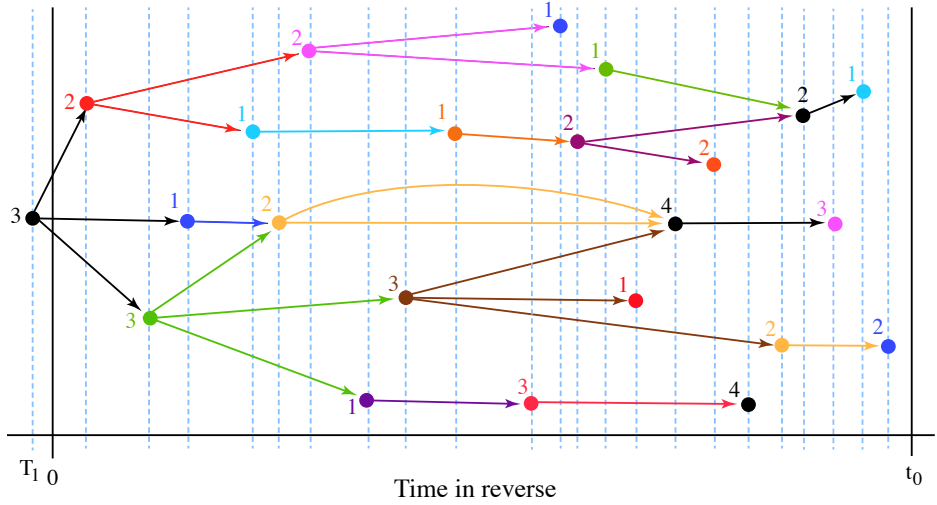}
\caption{The predecessor graph $\mathcal{G}_n(t_0)$. The numbers in each node indicate the size of the job (number of pieces). Some nodes have fewer outbound edges than the size of the job, meaning that the corresponding piece found its server empty upon arrival. Nodes with multiple inbound edges correspond to jobs that are immediate predecessors to more than one job in the graph. Vertical lines indicate the time of arrival of each job; service requirements for the pieces of a job can be thought of as ``edge attributes" and cannot be read from the graph. This graph is consistent with Figure~\ref{F.Cloud} by letting the tagged job be the three piece black one.}  \label{F.PredecessorGraph}
\end{center}
\end{figure}

Now recall that $\emptyset$ denotes the tagged job and label its immediate predecessors $i$, with $1 \leq i \leq \hat N_\emptyset$. The jobs in the next level of predecessors will have labels of the form $(i_1, i_2)$, and in general, any job in the predecessor graph will have a label of the form ${\bf i} = (i_1, i_2, \dots, i_k)$, $k \geq 1$. With this notation, $\hat N_{\bf i}$ denotes the number of pieces that the job with label ${\bf i}$ in the graph is split into, $\hat \tau_{({\bf i},j)}$ will denote the inter arrival time between job ${\bf i}$ and its $j$th immediate predecessor (a job with label $({\bf i},j)$), and $\chi_{({\bf i},j)}$ denotes the service requirement of the fragment immediately in front of the queue of the $j$th piece of job ${\bf i}$.  Note that the tag of a job, which contains the specific server assignments, allows us to identify the immediate predecessors of a given job, but it plays no role afterwards. Therefore, we will use the labels, not tags, to identify jobs in the predecessor graph. See Figure \ref{F.PredecessorGraph}.

We point out that in case a job is an immediate predecessor to more than one job in the graph (or to more than one piece of the same job), the corresponding edges will merge into the common predecessor. Moreover, in this case, the common predecessor is assigned more than one label (e.g., if a job is an immediate predecessor to both jobs ${\bf i} = (i_1, \dots, i_k)$ and ${\bf j} = (j_1, \dots, j_l)$, then such job can be identified by two different labels, one of the form $({\bf i}, s)$ and another of the form $({\bf j}, t)$). Furthermore, the merged paths and the subgraph they define from that point onwards will have multiple labels as well. See Figure~\ref{F.Collisions}.

\begin{figure}[h]
\begin{center}
\includegraphics[scale=0.7, bb = 60 100 540 240, clip]{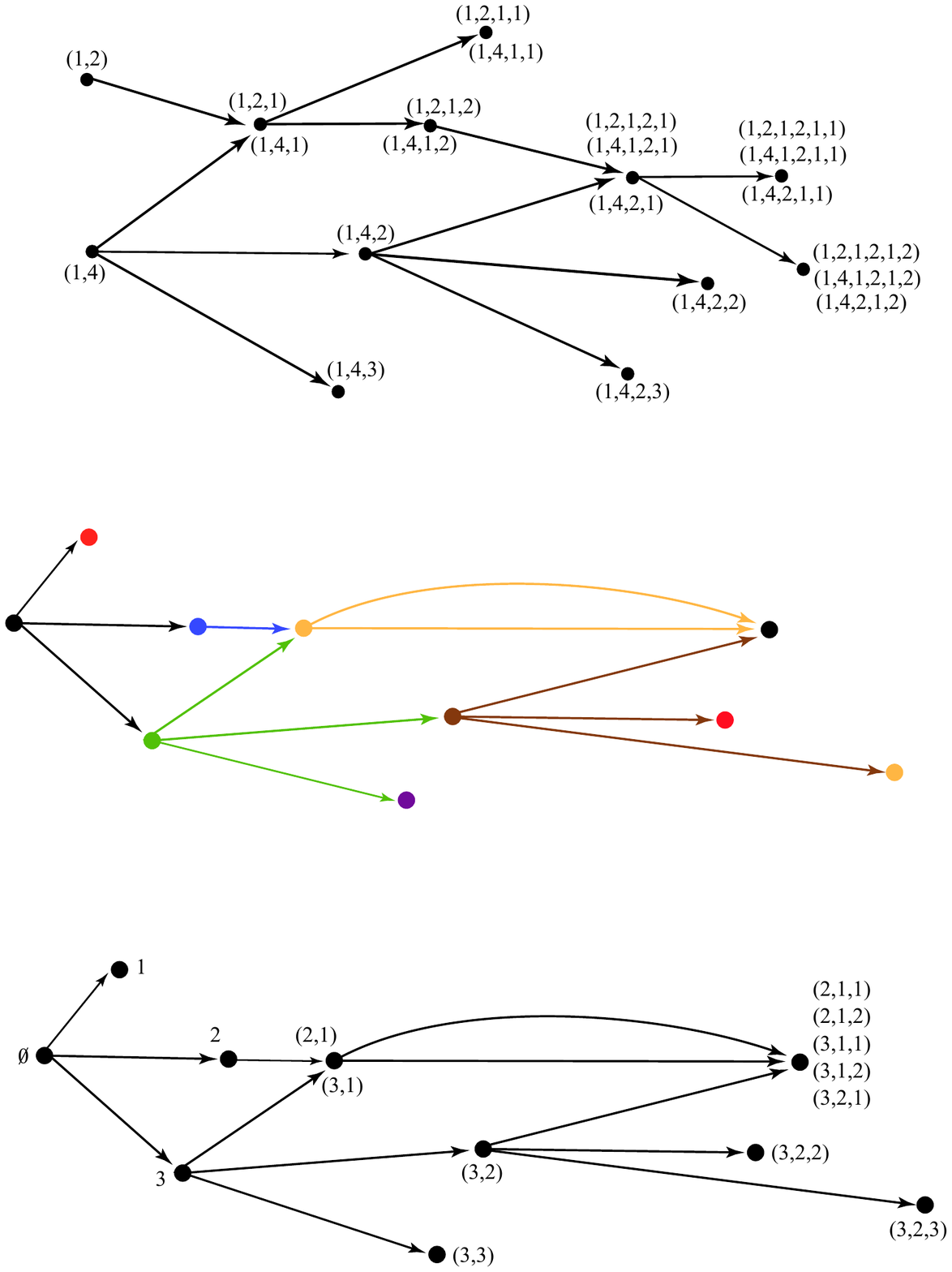}
\end{center}
\caption{Multiple labels due to common immediate predecessors. Excerpt from the predecessor graph in Figure~\ref{F.PredecessorGraph} showing the labeling of the jobs.} \label{F.Collisions}
\end{figure}

\begin{remark} We allow multiple labels for jobs that are immediate predecessors to more than one job (or more than one piece of the same job), since each path leading to a job represents a distinct sequence of fragments, with its own waiting time. 
\end{remark}

\begin{remark}
As pointed out earlier, the concept of the predecessor graph was introduced in \cite{Kar_Kel_Suh_94}, although in less detail than here, e.g., the labeling of jobs is not rigorous and there is no mention of multiple labels. In our case, the new coupling technique that we use, which yields a stronger mode of convergence in the main theorem and allows us to take $N$ to be random, requires the more careful treatment described above.
\end{remark}

\bigskip
\section{Analysis of the steady-state waiting time} \label{S.MainAnalysis}

To analyze the waiting time of the tagged job we now derive a high-order Lindley recursion. To this end, let $W_{\bf i}^{(n, t_0)}$ denote the waiting time of the job having label ${\bf i}$ in the network with $n$ servers and that starts empty at time $t_0$.  We also define $\hat A_0 = \{ \emptyset\}$, and $\hat A_r = \{ (i_1, \dots, i_r) \in \mathcal{G}_n(t_0)\}$ for $r \geq 1$, to be the set of labels in the predecessor graph at graph distance $r$ from the tagged job, i.e., labels whose corresponding job is connected to the tagged job by a directed path of length $r$. For ${\bf i} \in \hat A_k$ and $r \geq 1$ let 
$$\mathcal{B}_{{\bf i}, r} = \left\{ {\bf j} \in \hat A_{k+r}: {\bf j} = ({\bf i}, i_{k+1}, \dots, i_{k+r}) \right\}$$
be the set of labels at distance $r$ from ${\bf i}$, with the simplified notation $\mathcal{B}_{\bf i} \equiv \mathcal{B}_{{\bf i},1}$. 

We then have that the tagged job's waiting time is given by
\begin{equation} \label{eq:Recursion}
W^{(n, t_0)}_\emptyset = \max\left\{ 0, \, \max_{i \in \mathcal{B}_\emptyset} \left( \chi_{i} - \hat \tau_{i} + W^{(n, t_0)}_{i} \right) \right\},
\end{equation}
with the boundary condition that the first job to arrive after time $t_0$, and any other job that arrives thereafter and is the first one to use its assigned servers, will have a waiting time of zero (recall that the system is empty at time $t_0$).

To analyze \eqref{eq:Recursion} define $\hat X_\emptyset \equiv 0$ and $\hat X_{\bf i} = \chi_{{\bf i}} - \hat \tau_{{\bf i}}$ for ${\bf i} = (i_1, \dots, i_{k})$. Next, let
$$\kappa = \max\{ r \in \mathbb{N}_+: |\mathcal{B}_{\emptyset,r}| > 0 \},$$
where $|A|$ denotes the cardinality of set $A$. Note that $\kappa$ is a random variable and corresponds to the maximum length of any directed path in $\mathcal{G}_n(t_0)$. Furthermore, for any ${\bf i} \in \hat A_\kappa$ we have $W_{\bf i}^{(n, t_0)} = 0$, and therefore, for any ${\bf i} \in \hat A_{\kappa-1}$, 
$$W_{\bf i}^{(n, t_0)} = \max\left\{ 0, \max_{{\bf j} \in \mathcal{B}_{\bf i}} \hat X_{\bf j} \right\} .$$
Similarly, iterating \eqref{eq:Recursion} we obtain for ${\bf i} \in \hat A_{\kappa-2}$, 
\begin{align*}
W_{\bf i}^{(n, t_0)} &= \max\left\{ 0, \, \max_{{\bf j} \in \mathcal{B}_{\bf i}} \left( \hat X_{\bf j} + W_{\bf j}^{(n, t_0)} \right) \right\} \\
&=  \max\left\{ 0, \, \max_{{\bf j} \in \mathcal{B}_{{\bf i},1}} \hat X_{\bf j}, \, \max_{{\bf j} \in \mathcal{B}_{{\bf i},2} } \left( \hat X_{{\bf j}|1} +  \hat X_{\bf j}  \right) \right\}.
\end{align*}
In general, after iterating \eqref{eq:Recursion} $\kappa$ times we obtain
\begin{align}
W^{(n, t_0)}_\emptyset &= \max\left\{ 0, \max_{j \in \mathcal{B}_{\emptyset, 1}} \hat X_j, \dots, \max_{{\bf j} \in \mathcal{B}_{\emptyset, \kappa}} \left( \hat X_{{\bf j}| 1} + \hat X_{{\bf j} | 2} + \cdots + \hat X_{\bf j} \right) \right\} .   \label{eq:Solution}
\end{align}

Having now a recursive equation for the waiting time, we need to identify conditions under which this queueing system will be stable, and then describe the stationary distribution of the waiting time. The key idea to solve both problems is that the predecessor graph is very close to being a tree; more precisely, the only thing that prevents it from being a tree is the occasional arrival of a job that is an immediate predecessor to two or more jobs in $\mathcal{G}_n(t_0)$. It turns out that under the scaling we consider in our model (arrival rate equal to $\lambda n$), the probability of this occurring within the timeframe needed for the tagged job to start its service is very small (geometrically small). Once we show that this is the case, taking the limit as $t_0 \to -\infty$ will yield the stability of the network.

To describe the stationary distribution of the waiting time we first observe that, provided the first time that two paths in the predecessor graph merge occurs after the tagged job has initiated its service, we have that the $\hat \tau$'s will be i.i.d. exponential random variables with some rate $\lambda_n^*$ and the $\chi$'s will be i.i.d. with distribution $B$ (since they will all belong to different jobs). It follows that under the same conditions that guarantee the stability of the network we would have that, after taking the limit as $t_0 \to -\infty$ and the number of servers $n \to \infty$, $W_\emptyset^{(n, t_0)}$ would have to converge to a solution to the stochastic fixed-point equation
\begin{equation} \label{eq:HighOrderLindley}
W \stackrel{\mathcal{D}}{=} \max\left\{ 0, \, \max_{1\leq i \leq N} \left( \chi_{ i} -  \tau_{i} + W_{i} \right) \right\},
\end{equation}
where $\{W_i\}$ are i.i.d. copies of $W$, independent of $(N, \chi_1, \tau_1, \chi_2, \tau_2, \dots)$, with $N$ having distribution $f \triangleq \lim_{n \to \infty} f_n$, the $\chi$'s i.i.d. having distribution $B$, and 
 the $\tau$'s i.i.d. exponential random variables with rate $\lambda^* \triangleq \lim_{n \to \infty} \lambda_n^*$, all random variables independent of each other.   Note also that we have replaced the set over which the maximum is computed, $i \in \mathcal{B}_\emptyset$, with $1 \leq i \leq N$, since in stationarity all the pieces of a job have an immediate predecessor.

It turns out that \eqref{eq:HighOrderLindley} has multiple solutions \cite{Biggins_98}, unlike the standard Lindley equation for $N \equiv 1$. It is the structure of \eqref{eq:Solution} that will allow us to identify the correct one. As we will see in the following sections, the appropriate solution is the so-called endogenous one, which is also the minimal one in the usual stochastic order sense.

To identify the value of $\lambda^*$ recall that in the time reversed setting, for the system with $n$ servers, we can think of independent Poisson processes each generating jobs with a tag of the form $(s_1, s_2, \dots, s_k)$, for $1\leq k \leq m_n$, and $s_i \in \{1, 2, \dots, n\}$ for all $i$, at rate
$$\lambda_k = \frac{f_n(k) \lambda n}{n!/(n-k)!}.$$ 
Moreover, a piece of a job requiring service at server $s_i$ can have as an immediate predecessor any job of any size requiring service at server $s_i$. In particular, there are a total of $\binom{n-1}{k-1} k!$ possible predecessors of size $k$, and therefore, the inter arrival time between the piece of the job requiring service from server $s_i$ and its predecessor is exponentially distributed with rate
\begin{equation} \label{eq:Lambda_n}
\lambda^*_n=\sum_{k=1}^{m_n} \lambda_k \binom{n-1}{k-1} k! = \sum_{k=1}^{m_n} \frac{f_n(k) \lambda n}{\binom{n}{k}} \binom{n-1}{k-1} = \lambda \sum_{k=1}^{m_n} k f_n(k).
\end{equation}

It follows that, assuming $f_n$ is uniformly integrable, 
$$\lambda^* = \lambda \sum_{k=1}^\infty k f(k) = \lambda E[ N].$$

Equation \eqref{eq:HighOrderLindley} is known in the literature (\cite{Kar_Kel_Suh_94, Biggins_98, Jel_Olv_14}) as a high-order Lindley equation, and the behavior of its endogenous solution is given in Section \ref{SS.TheLimit}.

\subsection{Main result} \label{SS.MainResult}

Before we formulate the main result of this paper it is convenient to specify the conditions we need to impose on $f_n$, $\lambda$, and $B$. Recall that the service requirements of a job of size $k$, $(\chi^{(1)}, \dots \chi^{(k)})$, are allowed to be arbitrarily dependent, say having some joint distribution $B_k({\bf x})$ on $\mathbb{R}_+^k$, but are assumed to be independent of the number of pieces. In this notation, $B$ is the distribution of the service requirement of a randomly chosen piece, i.e.,
$$B(x) = \sum_{k=1}^{m_n} f_n(k) \sum_{i=1}^k \frac{1}{k} P(\chi^{(i)} \leq x).$$
Throughout the paper, $\Rightarrow$ denotes convergence in distribution.

\begin{assum} \label{A.Nconditions}
Suppose that $f_n$ is a distribution on $\{1 ,2, \dots, m_n\}$, with $m_n \leq n$, $B$ is a distribution on $\mathbb{R}_+$, and $\lambda > 0$.
\begin{list}{\roman{enumi})}{\itemsep 0pt \leftmargin 20pt }
\usecounter{enumi}
\item Suppose there exists a distribution $f$ on $\mathbb{N}_+$, having finite mean, such that $f_n \Rightarrow f$ as $n \to \infty$ and $f_n$ is uniformly integrable.
\item Suppose there exists $\beta > 0$ such that 
$$E\left[ \sum_{i=1}^N e^{\beta (\chi_i - \tau_i)} \right] = \frac{\lambda^*}{\lambda^* + \beta} \, E[N] E\left[ e^{\beta \chi_1 } \right]< 1,$$
where $N$ is distributed according to $f$, $\{\chi_i\}$ are i.i.d. random variables with distribution $B$, independent of $N$, and $\{ \tau_i\}$ are i.i.d. exponentially distributed random variables with rate $\lambda^* = \lambda E[N]$, independent of $(N, \chi_1, \dots, \chi_N)$. 
\item Suppose that 
$$\lim_{n \to \infty} \frac{1}{n} \sum_{k=1}^{m_n} k^2 f_n(k) = 0.$$
\end{list}
\end{assum}

To give some examples of distributions for which Assumption \ref{A.Nconditions} is satisfied, let $N$ be distributed according to $f$ and consider 
$$f_n(k) = P( \min\{ N, m_n\} = k) \qquad \text{or} \qquad f_n(k) = P(N = k| N \leq m_n).$$
 In both cases, provided $E[N] < \infty$, we can take any $m_n \to \infty$, including $m_n = n$, and have $f_n$ uniformly integrable, since the monotone convergence theorem gives $E[ \min\{N, m_n\} ] \to E[N]$ and $E[ N | N \leq m_n] = E[ N 1(N \leq m_n)]/P(N \leq m_n) \to E[N]$. 
 Finally, Assumption~\ref{A.Nconditions} (iii) would be satisfied in both examples, with $m_n = n$, if $E[N^{1+\epsilon}] < \infty$ for some $\epsilon > 0$, since then
 $$\lim_{n\to\infty} \frac{1}{n} \sum_{k=1}^{n} k^2 f_n(k) \leq \lim_{n \to \infty} \frac{ n^{1-\epsilon} }{n} \sum_{k=1}^n k^{1+\epsilon} f_n(k) = E[N^{1+\epsilon}] \lim_{n\to\infty} n^{-\epsilon} = 0. $$
In case $E[N^{1+\epsilon}] = \infty$ for all $\epsilon > 0$, one would need to take $m_n = o(n)$ to obtain
$$\lim_{n\to\infty} \frac{1}{n} \sum_{k=1}^{m_n} k^2 f_n(k) \leq \lim_{n \to \infty} \frac{m_n }{n} \sum_{k=1}^n k f_n(k) = E[N] \lim_{n\to\infty} \frac{m_n}{n} = 0.$$

We are now ready to formulate the main theorem. 

\begin{theo} \label{T.Main}
Let $W_\emptyset^{(n, t_0)}$ denote the waiting time, excluding service, of the tagged job (the first job to arrive after time zero) when we start the system empty at time $t_0 < 0$ and the network consists of $n$ servers. Suppose that
\begin{equation} \label{eq:stabilityCond}
 E[\hat N] E\left[ e^{\beta (\chi - \hat \tau)} \right] < 1
\end{equation}
for some $\beta > 0$, where $\hat N$ has distribution $f_n$, $\chi$ has distribution $B$ and $\hat \tau$ is exponentially distributed with rate $\lambda_n^*$ and is independent of $\chi$. Then, for any fixed number of servers $n$,
$$\lim_{t_0 \to - \infty} W_\emptyset^{(n, t_0)} = W^{(n)} \quad a.s.$$
for some finite random variable $W^{(n)}$. Moreover, provided Assumption \ref{A.Nconditions} is satisfied, 
$$W^{(n)} \Rightarrow W,$$
as $n \to \infty$, where $W$ is the endogenous solution to \eqref{eq:HighOrderLindley}. Furthermore, for any $p > 0$,
$$E\left[ (W^{(n)})^p \right] \to E\left[ W^p \right] < \infty, \qquad n \to \infty.$$
\end{theo}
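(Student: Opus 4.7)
The plan has three parts corresponding to the three claims: (i) existence of $W^{(n)}$ for each fixed $n$, (ii) weak convergence $W^{(n)}\Rightarrow W$, and (iii) convergence of $p$-th moments. For (i), I would apply Loynes' monotonicity argument to the path representation \eqref{eq:Solution}: as $t_0$ decreases, the predecessor graph $\mathcal G_n(t_0)$ can only grow, so $W_\emptyset^{(n,t_0)}$ is non-decreasing in $-t_0$ and converges almost surely to some $W^{(n)} \in [0,\infty]$. To show that $W^{(n)}<\infty$ a.s., I would dominate the (possibly non-tree) graph $\mathcal G_n(\infty)$ by a Galton--Watson tree $\hat{\mathcal T}_n$ in which every node has $\hat N$ independent offspring with i.i.d.\ edge weights $\hat X=\chi-\hat\tau$, and combine a direct path expectation with Markov's inequality: for the $\beta>0$ of \eqref{eq:stabilityCond},
\[
P\bigl(W^{(n)}>x\bigr) \le e^{-\beta x}\,E\Bigl[\sum_{r\ge 0}\sum_{|\mathbf j|=r}\prod_{t=1}^r e^{\beta \hat X_{\mathbf j|t}}\Bigr] = e^{-\beta x}\sum_{r\ge 0}\bigl(E[\hat N]\,E[e^{\beta \hat X}]\bigr)^r,
\]
which is finite by \eqref{eq:stabilityCond}. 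Paths that merge in $\mathcal G_n(\infty)$ are simply counted separately in the dominating tree, so this is a bona fide upper bound.

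For (ii), the plan is to couple $W^{(n)}$ with the endogenous solution $W$, which admits the path representation
\[
W=\max\Bigl\{0,\;\sup_{r\ge 1}\max_{|\mathbf j|=r}\sum_{t=1}^r(\chi_{\mathbf j|t}-\tau_{\mathbf j|t})\Bigr\}
\]
on a weighted branching tree $\mathcal T$ with offspring law $f$, i.i.d.\ service times $\chi_{\mathbf j}\sim B$ and i.i.d.\ inter-arrival times $\tau_{\mathbf j}\sim \text{Exp}(\lambda^*)$. I would proceed by a depth-$R$ truncation: thanks to (i), paths of length exceeding $R$ contribute geometrically small mass uniformly in $n$, so it suffices to couple the depth-$R$ truncations. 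Within depth $R$, $\mathcal G_n(\infty)$ can be coupled with $\mathcal T$ because (a) $f_n\Rightarrow f$ by Assumption \ref{A.Nconditions}(i); (b) $\lambda_n^*=\lambda\sum_k k f_n(k)\to \lambda E[N]=\lambda^*$ by uniform integrability of $f_n$; and (c) the expected number of collisions in the first $R$ levels is of order $R\,n^{-1}\sum_k k^2 f_n(k)\to 0$ by Assumption \ref{A.Nconditions}(iii). Bounding everything in the $L^1$-Wasserstein distance and using the triangle inequality yields $d_1(W^{(n)},W)\to 0$, whence $W^{(n)}\Rightarrow W$.

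For (iii), the argument is uniform integrability. Since $\lambda_n^*\to\lambda^*$ and $f_n\Rightarrow f$ is uniformly integrable, Assumption \ref{A.Nconditions}(ii) produces a single $\beta>0$ and $\rho<1$ such that $E[\hat N]\,E[e^{\beta \hat X}]\le \rho$ for all $n$ large. The Markov bound from (i) then gives $\sup_n E[e^{\beta W^{(n)}}]<\infty$, so the family $\{(W^{(n)})^p\}_n$ is uniformly integrable for every $p>0$; combined with the weak convergence from (ii), this delivers $E[(W^{(n)})^p]\to E[W^p]<\infty$.

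The main obstacle I foresee is the coupling in (ii), specifically justifying that, after the $t_0\to-\infty$ limit has been taken for each fixed $n$, on the collision-free event the stationary predecessor graph is distributionally a weighted branching tree with the correct marginals: offspring law $f_n$, $\text{Exp}(\lambda_n^*)$ edge lengths, and $B$-distributed service times that are independent across distinct jobs. This requires marrying the Poisson thinning description of arrivals with the time-reversal argument and the (somewhat delicate) labelling conventions of Section~\ref{S.Model}, including the treatment of duplicated labels on merged paths. Once this distributional identification is in place, the remaining estimates reduce to elementary many-to-one computations on the dominating branching process.
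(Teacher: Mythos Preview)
Your plan matches the paper's proof closely in structure. For (i) the paper does exactly Loynes monotonicity followed by the many-to-one bound $E\bigl[\sum_{{\bf j}\in\hat A_r} e^{\beta\hat S_{\bf j}}\bigr]\le \bigl(E[\tilde N]\,E[e^{\beta(\chi-\tilde\tau)}]\bigr)^r$, though it phrases the conclusion as $P(\hat U_r>0)\le c^r$ and invokes Borel--Cantelli rather than writing the full exponential tail bound you state. One point your justification (``paths that merge are simply counted separately'') does not cover: the subtlety is not only multiplicity of labels but that the inter-arrival times $\hat\tau_{\bf i}$ in the predecessor graph are \emph{dependent} on the $\hat N$'s (conditioning on a merge alters the effective Poisson rate). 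The paper deals with this by noting that along any fixed label-path the $\hat N$'s and $\chi$'s are genuinely i.i.d., and then replaces the $\hat\tau$'s term by term with i.i.d.\ Exp$(\lambda_n^*)$ copies via a stochastic-order argument; you will need that step, not just the unfolding.

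For (ii) the paper carries out precisely your depth-truncation-plus-coupling, organized as a four-term Wasserstein triangle inequality
\[
W_p(\mu_n,\mu)\le W_p(\mu_n,\hat\nu_{r_n})+W_p(\hat\nu_{r_n},\tilde\nu_{r_n})+W_p(\tilde\nu_{r_n},\nu_{r_n})+W_p(\nu_{r_n},\mu),
\]
which separates the graph-to-tree step (your collision estimate; their Lemma bounding $E[\sum_{{\bf i}\in M_r}e^{\beta\hat S_{\bf i}}]\le r n^{-1}E[\tilde N^2](\tilde\rho_\beta)^r$) from the $(f_n,\lambda_n^*)\to(f,\lambda^*)$ step (an explicit inverse-transform coupling of two weighted branching trees). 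Your outline collapses these into one sentence, but the three ingredients you list---$f_n\Rightarrow f$, $\lambda_n^*\to\lambda^*$, and the $O\bigl(R\,n^{-1}\sum_k k^2 f_n(k)\bigr)$ collision count---are exactly what drive the two propositions, so the obstacle you anticipate is real but is resolved by the estimates you already name.

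The one genuine methodological difference is (iii). The paper proves $W_p$-convergence directly for every $p\ge 1$, which delivers moment convergence automatically; your route---establish $W_1$-convergence, then use the uniform exponential tail from (i) to get $\sup_n E[e^{\beta W^{(n)}}]<\infty$ and hence uniform integrability of $\{(W^{(n)})^p\}$---is a legitimate and more economical alternative. It buys you the moments with only $L^1$ estimates in the coupling, at the price of a separate UI argument; the paper's route is more unified but requires carrying $L^p$ bounds through both coupling propositions and choosing $r_n$ compatibly with $p$.
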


The key idea for the proof of the stability result is to couple the predecessor graph with a weighted branching tree (\cite{Rosler_93, Jel_Olv_12a}) and show that the waiting time of the tagged job is dominated by the  maximum of the random walks along all the paths of the tree.  The identification of the limit with the endogenous solution to the high-order Lindley equation \eqref{eq:HighOrderLindley} will follow from a similar coupling argument between the predecessor graph and a weighted branching tree, in which we will show that with high enough probability the tagged job will initiate its service before we observe the first merging of paths. This critical timescale at which the first merging of paths is observed also explains why the dependence among the different service requirements of a job plays no role, since a typical job will only ``see" one fragment from each of its predecessors still present in the system when it arrives. This dependence does  however impact the sojourn time, i.e., the time a job spends in the system from the moment it arrives until all its fragments complete service, which in the limit is given by
\begin{equation} \label{eq:SojournLimit}
T = \max\left\{ 0, \max_{1 \leq i \leq N} (\chi_i - \tau_i + W_i) \right\} + \max_{1 \leq j \leq N} \chi^{(j)},
\end{equation}
where $(\chi^{(1)}, \dots, \chi^{(k)})$ is distributed according to $B_k({\bf x})$ and is independent of $(N, \{\chi_i\}, \{\tau_i\}, \{W_i\})$, and the $\{W_i\}$ are i.i.d.~copies of the endogenous solution to \eqref{eq:HighOrderLindley}.

\subsection{Analyzing the limit: Generalized Cram\'er-Lundberg approximation}  \label{SS.TheLimit}

As stated in Theorem \ref{T.Main}, the stationary waiting time in the system with $n$ servers converges to the endogenous solution to the stochastic fixed-point equation \eqref{eq:HighOrderLindley}, which receives its name since it can be explicitly constructed on a weighted branching process. For completeness, we now briefly describe the construction of a weighted branching tree, which is more general than the setup considered in this paper.

Let $(Q, N, C_1, C_2, \dots)$ be a vector with $N \in \mathbb{N} \cup \{ \infty\}$, and $Q, \{C_i\}$ real-valued; the interpretation of $Q$ and the $\{C_i\}$ depends on the application. Given a sequence of i.i.d. vectors \linebreak $\left\{ (Q_{\bf i}, N_{\bf i}, C_{({\bf i},1)}, C_{({\bf i}, 2)}, \dots) \right\}_{{\bf i} \in U}$ having the same distribution as the generic branching vector \linebreak $(Q, N, C_1, C_2, \dots)$, we use the random variables $\{ N_{\bf i} \}_{{\bf i} \in U}$ to determine the structure of a tree as follows. Let $A_0 = \left\{ \emptyset \right\}$ and
\begin{align} 
A_r &= \{ ({\bf i}, i_r) \in U:  {\bf i} \in A_{r-1}, 1 \leq i_r \leq N_{\bf i} \}, \quad r \geq 1, \label{eq:AnDef}
\end{align}
be the set of individuals in the $r$th generation. Next, assign to each node ${\bf i}$ in the tree a weight $\Pi_{\bf i}$ according to the recursion
$$\Pi_{\emptyset} = 1, \qquad \Pi_{({\bf i}, j)} = C_{({\bf i}, j)} \Pi_{\bf i}. $$
Each weight $\Pi_{\bf i}$ is also usually multiplied by its corresponding value $Q_{\bf i}$ to construct solutions to non-homogeneous stochastic fixed-point equations.

In the general formulation, the vector $(Q, N, C_1, C_2, \dots)$ is allowed to be arbitrarily dependent, although for the special case appearing in this paper we will have $N < \infty$ a.s., $Q \equiv 1$, and the $\{C_i\}$ nonnegative, i.i.d., and independent of $N$. For more details we refer the reader to \cite{Rosler_93, Jel_Olv_12a, Jel_Olv_12b}.

To make the connection between the high-order Lindley equation \eqref{eq:HighOrderLindley} and the main result in \cite{Jel_Olv_14}, let $R = e^W$, $R_i = e^{W_i}$, $Q \equiv 1$, and $C_i = e^{\chi_i - \tau_i}$ to obtain
\begin{equation} \label{eq:Maximum}
R \stackrel{\mathcal{D}}{=} Q \vee \left(\bigvee_{i=1}^N C_i R_i \right),
\end{equation}
where $x \vee y$ denotes the maximum of $x$ and $y$. We refer to \eqref{eq:Maximum} with a generic branching vector of the form $(Q, N, C_1, C_2, \dots)$ with the $\{C_i\}$ nonnegative, and the $\{R_i\}$ i.i.d. copies of $R$ independent of $(Q, N, C_1, C_2, \dots)$, as the {\em branching maximum equation}. 

It is easy to verify, as was done in \cite{Jel_Olv_14}, that the random variable 
$$R \triangleq \bigvee_{r=0}^\infty \bigvee_{{\bf j} \in A_r} \Pi_{\bf j} Q_{\bf j}$$
is a solution to \eqref{eq:Maximum}, known in the literature as the endogenous solution (\cite{Aldo_Band_05, Biggins_98}). Moreover, when $Q \geq 0$, the endogenous solution is also the minimal one in the usual stochastic order sense (see \cite{Biggins_98} and also the survey paper \cite{Aldo_Band_05}  for additional references and a wide variety of max-plus equations). Taking logarithms on both sides of \eqref{eq:Maximum} (with $Q \equiv 1$), we obtain that the endogenous solution to \eqref{eq:HighOrderLindley} is given by
\begin{equation} \label{eq:EndogenousSol}
W \triangleq \bigvee_{r=0}^\infty \bigvee_{{\bf j} \in A_r} S_{\bf j} ,
\end{equation}
where  $S_{\emptyset} = 0$, $S_{\bf j} = \log \Pi_{\bf j} = X_{{\bf j}|1} + X_{{\bf j}|2} + \dots + X_{\bf j}$ for ${\bf j} \neq \emptyset$, and $X_{\bf i} = \chi_{\bf i} - \tau_{\bf i}$.  Furthermore, it was shown in \cite{Jel_Olv_14} (see Lemma 3.1) that this endogenous solution is finite almost surely provided 
$$E\left[ \sum_{i=1}^N e^{\beta X_i} \right] < 1$$
for some $\beta > 0$, which we will refer to as the stability condition. Note that with respect to the queueing model in this paper, this stability condition implies the usual ``load condition", i.e., arrival rate divided by service rate strictly smaller than one, which in this case would be $\lambda E[N] E[\chi] = E[\chi] /E[\tau] < 1$; the two are equivalent when $E[N] = 1$. 

\begin{remark}
The stability condition guarantees that $W$, as defined by \eqref{eq:EndogenousSol}, is finite almost surely. Moreover, by Theorem 4 in \cite{Biggins_98}, the existence of $\beta > 0$ such that $E\left[ \sum_{i=1}^N e^{\beta X_i} \right] \leq 1$ is the corresponding necessary condition (since $P\left( \max_{1 \leq i \leq N} (\chi_i - \tau_i) > 0 \right) > 0$). However, we do not consider in this paper the boundary condition where $E\left[ \sum_{i=1}^N e^{\theta X_i} \right] = 1$ for some $\theta > 0$ but $E\left[ \sum_{i=1}^N e^{\beta X_i} \right] > 1$ for all $\beta \neq \theta$.  
\end{remark} 

By rewriting $W$ as 
$$W = \max\left\{ 0, \, \max_{j \in A_1} X_j, \, \max_{{\bf j} \in A_2} \left( X_{{\bf j}|1} + X_{\bf j} \right), \, \max_{{\bf j} \in A_3} \left( X_{{\bf j}|1} + X_{{\bf j}|2} + X_{\bf j} \right), \dots \right\},$$
the similarities with \eqref{eq:Solution} become apparent.  To give some additional intuition as to why \eqref{eq:EndogenousSol} is the appropriate solution, it is helpful to recall the $N \equiv 1$ case, where Lindley's equation is known to have a unique solution whenever $E[X_1] < 0$. Moreover, as mentioned in the introduction, this solution can be expressed in terms of the supremum of the random walk $S_k = X_1 + \dots + X_k$, $S_0 = 0$. A standard proof of this relation consists in iterating the recursion
$$W_{n+1} = \max\{ 0, \, X_n + W_n \}, \qquad W_0 = 0,$$
to obtain 
$$W_{n+1} = \max\left\{ 0, \, X_n, \, X_n + X_{n-1}, \dots, \, X_n + X_{n-1} +\dots + X_1 \right\} \stackrel{\mathcal{D}}{=} \max_{0 \leq k \leq n} S_k.$$
It follows by taking the limit as $n \to \infty$ on both sides that the stationary waiting time in the FCFS GI/GI/1 queue satisfies
$$W \stackrel{\mathcal{D}}{=} \max_{k \geq 0} S_k.$$

It is then to be expected that the asymptotic analysis of the waiting time in the single-server queue can also be generalized to the branching setting. This is indeed the case, as was recently shown in \cite{Jel_Olv_14}. There, for the endogenous solution to the general branching maximum recursion  \eqref{eq:Maximum}, it was shown that under the root condition $E\left[ \sum_{i=1}^N C_i^\theta \right] = 1$ and the derivative condition $0 < E\left[ \sum_{i=1}^N C_i^{\theta} \log C_i \right] < \infty$, we have that
\begin{equation} \label{eq:PowerLawTail}
P( R > x) \sim H x^{-\theta}, \qquad x \to \infty,
\end{equation}
for some constants $\theta, H > 0$. Note that for the high-order Lindley's equation \eqref{eq:HighOrderLindley}, this condition translates into the existence of a root $\theta > 0$ such that $E\left[ \sum_{i=1}^N e^{\theta X_i} \right] = 1$. The power-law asymptotics of $R$ are a consequence of the Implicit Renewal Theorem on Trees from \cite{Jel_Olv_12a, Jel_Olv_12b}, which constitutes a powerful tool for the analysis of many different types of branching recursions, e.g., the maximum recursion (\cite{Alsm_Rosl_08, Jel_Olv_14}), the linear recursion or smoothing transform (\cite{Als_Big_Mei_10, Alsm_Mein_10a, Alsm_Mein_10b, Jel_Olv_12a, Jel_Olv_12b, Alsm_Dam_Ment_13}), the discounted tree sum (\cite{Aldo_Band_05}), etc.  This theorem is in turn a generalization of the Implicit Renewal Theorem of \cite{Goldie_91} for non-branching recursions, which can be used to analyze the random coefficient autoregressive process of order one and the reflected random walk, among others. The name ``implicit" refers to the fact that the Renewal Theorem is applied to a random variable $R$ (e.g., the solution to a stochastic fixed-point equation) without having knowledge of its distribution, which in turn leads to the resulting constant $H$ in the asymptotics to be implicitly defined in terms of $R$ itself. 

We conclude this section with the theorem describing the asymptotic behavior of $W$, the endogenous solution to \eqref{eq:HighOrderLindley}.

\begin{theo} \label{T.CramerLundberg}
Let $W$ be given by \eqref{eq:EndogenousSol}, with $N$ distributed according to $f$, $\{\chi, \chi_i\}$ i.i.d.~with common distribution $B$, and $\{\tau_i\}$ i.i.d.~exponentially distributed with rate $\lambda^*$; all random variables independent of each other. Suppose that for some $\theta > 0$, $E[N] E[e^{\theta \chi} ] \lambda^*/(\lambda^* + \theta)  = 1$ and $0 < E\left[ e^{\theta \chi} \chi \right] - E\left[ e^{\theta \chi} \right]/(\lambda^*+ \theta)  < \infty$. In addition, assume that for some $\epsilon > 0$, $E\left[ N^{\theta \vee (1+\epsilon)} \right] < \infty$. Then, 
$$P( W > x) \sim H e^{-\theta x}, \qquad x \to \infty,$$
where $0 < H < \infty$ is given by
$$H = \frac{(\lambda^*+\theta)^2}{\theta \lambda^* E[N]} \cdot \frac{E \left[ 1 \vee \bigvee_{i=1}^N e^{\theta ( \chi_i - \tau_i + W_i)} - \sum_{i=1}^N e^{\theta(\chi_i - \tau_i + W_i)} \right] }{ (\lambda^*+\theta) E\left[ e^{\theta \chi} \chi \right] - E\left[ e^{\theta \chi} \right] },$$
with the $\{W_i\}$ i.i.d.~copies of $W$, independent of $(N, \chi_1, \tau_1, \dots, \chi_N, \tau_N)$.  
\end{theo}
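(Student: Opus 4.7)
The plan is to deduce Theorem~\ref{T.CramerLundberg} from the implicit renewal theorem for branching recursions (Theorem~3.4 of \cite{Jel_Olv_14}) via the exponential change of variables already sketched around equations \eqref{eq:Maximum}--\eqref{eq:EndogenousSol}. Specifically, I would set $R = e^W$, $R_i = e^{W_i}$, $Q \equiv 1$ and $C_i = e^{\chi_i - \tau_i}$, so that the high-order Lindley equation \eqref{eq:HighOrderLindley} is transformed into the branching maximum equation \eqref{eq:Maximum}, and the endogenous $W$ defined in \eqref{eq:EndogenousSol} corresponds exactly to the endogenous $R = \bigvee_{r \geq 0} \bigvee_{{\bf j} \in A_r} \Pi_{\bf j}$. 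Then any tail result of the form $P(R > y) \sim H y^{-\theta}$ immediately yields $P(W > x) = P(R > e^x) \sim H e^{-\theta x}$.

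The second step is to verify the hypotheses of Theorem~3.4 of \cite{Jel_Olv_14}. Using the independence of $N$, $\{\chi_i\}$ and $\{\tau_i\}$ together with the exponential identity $E[e^{-\theta \tau}] = \lambda^*/(\lambda^*+\theta)$, the root condition $E[\sum_{i=1}^N C_i^\theta]=1$ becomes
\begin{equation*}
E[N]\, E[e^{\theta \chi}]\, \frac{\lambda^*}{\lambda^* + \theta} = 1,
\end{equation*}
which is exactly the first hypothesis. For the derivative condition, the further identity $E[\tau e^{-\theta \tau}] = \lambda^*/(\lambda^*+\theta)^2$ gives
\begin{equation*}
E\!\left[\sum_{i=1}^N (\chi_i - \tau_i) e^{\theta(\chi_i - \tau_i)}\right] = \frac{E[N]\lambda^*}{\lambda^* + \theta}\left(E[\chi e^{\theta \chi}] - \frac{E[e^{\theta \chi}]}{\lambda^*+\theta}\right),
\end{equation*}
and using the root condition to replace $E[N]\lambda^*/(\lambda^*+\theta)$ by $1/E[e^{\theta \chi}]$ identifies this quantity with $(E[\chi e^{\theta \chi}] - E[e^{\theta \chi}]/(\lambda^*+\theta))/E[e^{\theta \chi}]$, whose strict positivity and finiteness are the second standing hypothesis. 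The moment condition $E[N^{\theta \vee (1+\epsilon)}] < \infty$ is precisely the one required by \cite{Jel_Olv_14} to control the implicit renewal argument.

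Once the hypotheses are checked, Theorem~3.4 of \cite{Jel_Olv_14} yields $P(R > y) \sim \tilde H y^{-\theta}$ with the Goldie-type constant
\begin{equation*}
\tilde H = \frac{E\!\left[1 \vee \bigvee_{i=1}^N C_i^\theta R_i^\theta - \sum_{i=1}^N C_i^\theta R_i^\theta\right]}{\theta\, E\!\left[\sum_{i=1}^N C_i^\theta \log C_i\right]}.
\end{equation*}
Substituting $C_i^\theta R_i^\theta = e^{\theta(\chi_i - \tau_i + W_i)}$ in the numerator and plugging in the closed-form expression derived above for the denominator produces the constant $H$ stated in the theorem; the algebraic manipulation amounts to clearing $E[e^{\theta \chi}]$ via the root identity $E[N]\, E[e^{\theta \chi}] = (\lambda^*+\theta)/\lambda^*$, which yields the prefactor $(\lambda^*+\theta)^2/(\theta \lambda^* E[N])$. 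The main obstacle, and the reason for the moment hypothesis on $N$, sits inside Theorem~3.4 of \cite{Jel_Olv_14} itself: establishing the exact power-law tail of the endogenous maximum requires a uniform integrability argument along the branching tree that is sensitive to higher moments of $N$, together with the observation that the stability condition forces $0<H<\infty$. Everything after that invocation is bookkeeping.
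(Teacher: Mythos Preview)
Your approach is the same as the paper's, but you have glossed over two verifications that the paper actually carries out. First, Theorem~3.4 in \cite{Jel_Olv_14} requires a non-arithmetic (non-lattice) hypothesis on the distribution of $\log C_i = \chi_i - \tau_i$; the paper checks this explicitly by noting that the $\tau_i$ are exponential and independent of $(N,\chi_1,\dots,\chi_N)$, but you never mention it. Second, your sentence ``the moment condition $E[N^{\theta\vee(1+\epsilon)}]<\infty$ is precisely the one required by \cite{Jel_Olv_14}'' is not accurate: the integrability hypothesis in that theorem is stated in terms of $E\bigl[\bigl(\sum_{i=1}^N C_i\bigr)^\theta\bigr]$ (with a modification when $\theta\le 1$), not directly as a moment of $N$. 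The paper devotes roughly half of its proof to deriving the needed bound from $E[N^{\theta\vee(1+\epsilon)}]<\infty$ via Lemma~4.1 of \cite{Jel_Olv_12a}, treating the cases $\theta>1$ and $0<\theta\le 1$ separately and using Jensen's inequality. Neither gap is fatal, but as written your proposal would not constitute a complete verification of the hypotheses you are invoking.
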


The constant $H$ in the asymptotic tail of $W$ can be computed via simulation, for example, by using the algorithm recently developed in \cite{Chen_Olv_15}, which can be used to generate the $\{W_i\}$ appearing in the expectation.

\bigskip
\section{Numerical experiments} \label{S.Numerics}

In this section of the paper we provide some numerical experiments comparing our model to two other: a non-distributed multiserver system and a split-merge queue. All the results in this section were obtained using discrete-event simulation, starting with an empty system. 

Throughout this section, we refer to the model studied in this paper as the synchronize at the beginning (SyncB) model. We also consider a split-merge queue (Split-Merge) where incoming jobs are split upon arrival into a number of pieces, and then assigned to randomly selected servers, each of which operates in a FCFS basis.  To explain how the synchronization occurs, assume that a job fragment that has completed service does not leave the server until all other pieces of the same job are done as well (since there are no output buffers in the system). Unlike in the SyncB model, fragments that have reached the front of their queues and find their servers available can begin processing immediately.

\begin{table}[t]
\begin{center} \begin{tabular}[h]{| c | c | c | l | c | c |}
\hline
\multicolumn{2}{| c |}{Job size} & Arrival & & Mean & \\
\cline{1-2} $E[N]$ & $\beta$ & rate $(\lambda n)$ & Model & Sojourn Time & 95\% C.I.\\
\hline
2 & 2/3   & 100 & SyncB   & 0.7290   & [0.6843, 0.7737]  \\
&  & & Split-Merge & 0.7389       &  [0.6943, 0.7836] \\
& & & M/G/$n$  & 0.9994         & [0.9206 , 1.0782]  \\ \hline 
10 & 6  & 6.5 & SyncB    & 1.1201   &    [1.0654, 1.1748] \\
& & & Split-Merge  & 1.1116   &    [1.0599, 1.1634]  \\
& & & M/G/$n$ & 4.9916  &     [4.6585, 5.3247]  \\ \hline
100 & 66 & 0.06 & SyncB   & 1.0138   &       [0.9995 , 1.0281]  \\
& &  & Split-Merge  & 1.0140   &       [0.9995 , 1.0284]  \\
& &  & M/G/$n$ & 49.7684     &          [46.8525, 52.6844]  \\  \hline
\end{tabular}
\vspace{5mm}
 \end{center}
\caption{Mean sojourn time. Simulated results for the average sojourn time of jobs in the SyncB, Split-Merge and M/G/$n$ models; in all models the number of servers is $n = 1000$, the arrival rate of jobs is $\lambda n$, indicated by the table, the service requirements of the pieces of a job are i.i.d.~U$(0,1)$, and the number of pieces is computed as $\hat N = N \wedge n$, with $N-1$ a mixed Poisson random variable with Pareto$(\alpha, \beta)$ rate, $\alpha = 3$, and $\beta$ according to the table. For the M/G/$n$ queue the service requirement of a job is the sum of the requirements of its pieces. All three simulations were run using the same arrivals and jobs. Simulations were run for a total of 30,000 jobs.}  \label{T.CompareThree}
\end{table}

Our first set of results compares the SyncB, Split-Merge and M/G/$n$ models, all run with the same Poisson arrivals and job distributions. For the M/G/$n$ model the service distribution is that of the sum of all the pieces in a job. The purpose of including the M/G/$n$ queue in this comparison is to illustrate the gain attained by distributing the work among parallel servers, which as Table~\ref{T.CompareThree} shows, outweighs the loss of efficiency due to the blocking. In all the experiments, we focus on the sojourn time of jobs, i.e., the amount of time a job spends in the system, from the time it arrives to the time it completes its service and leaves. Table~\ref{T.CompareThree} shows simulated values for the expected stationary sojourn time in a network with $n$ servers along with 95\% approximate confidence intervals (the parameters lie within the theoretical stable region for both the SyncB and M/G/$n$ models; we do not have a criterion for the stability of the Split-Merge model but the simulated results are consistent with stationarity). As we can see from the table, the two distributed systems are comparable, and better than the non-distributed M/G/$n$ queue. We point out that for the M/G/$n$ queue it can be verified that under the scaling considered here, the waiting time converges to zero, which reduces the sojourn time to essentially the service time of a job, i.e., a quantity of the form $\sum_{i=1}^{\hat N} \chi^{(i)}$, whereas in the SyncB model the waiting time is non-zero but the service time is $\bigvee_{i=1}^{\hat N} \chi^{(i)}$. In our experiments we have used i.i.d.~uniform service times for the job fragments and a heavy-tailed mixed Poisson for the number of pieces, in which case $\sum_{i=1}^{\hat N} \chi^{(i)}$ has a power-law tail whereas $\bigvee_{i=1}^{\hat N} \chi^{(i)}$ is bounded, which leads to a considerable number of jobs experiencing very long sojourn times in the non-distributed system.

\begin{figure}[t]
\begin{picture}(600,200)
\put(15,10){\includegraphics[scale=0.45, bb = 0 0 460 370, clip]{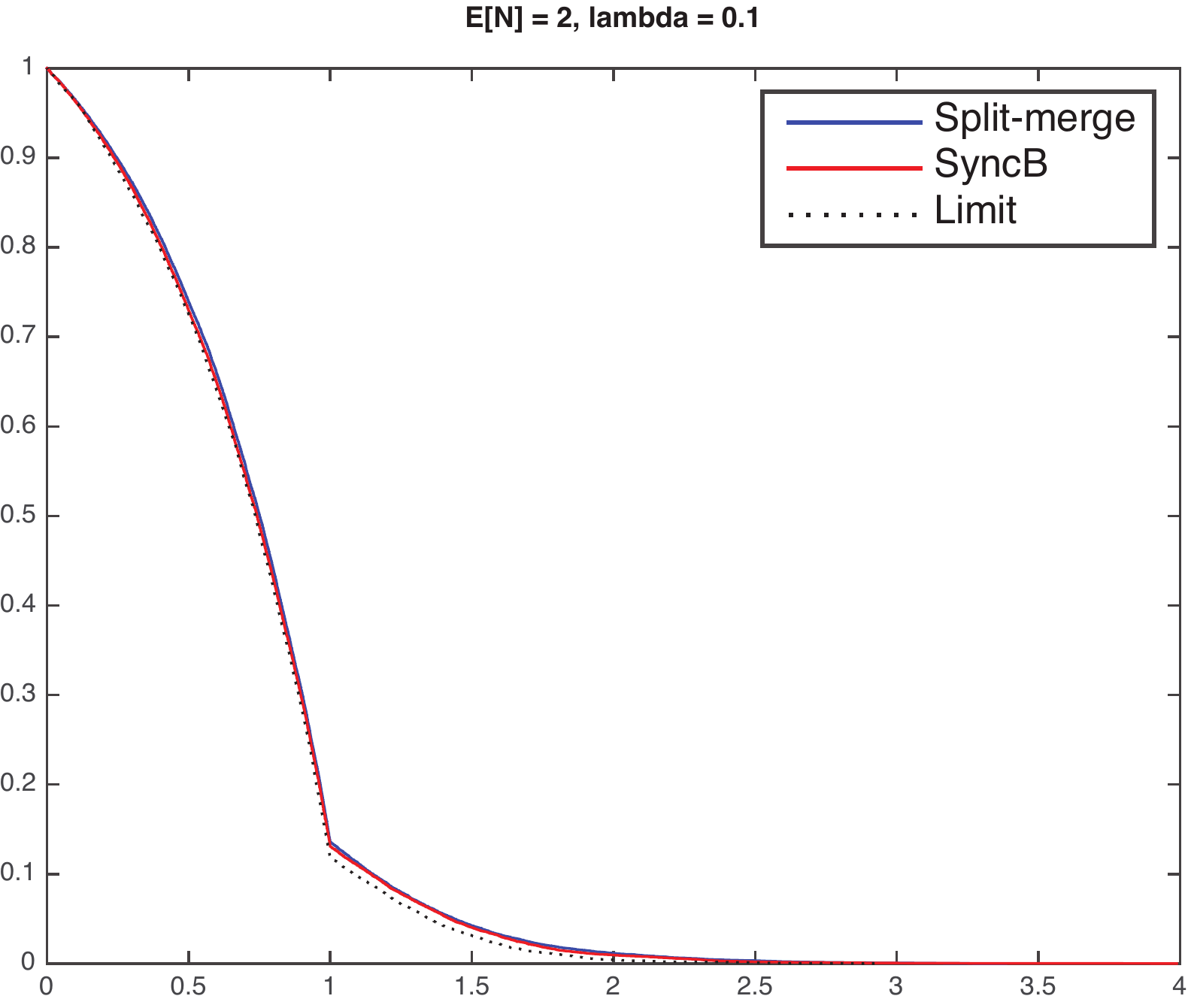}}
\put(260,10){\includegraphics[scale=0.45, bb = 0 0 460 370, clip]{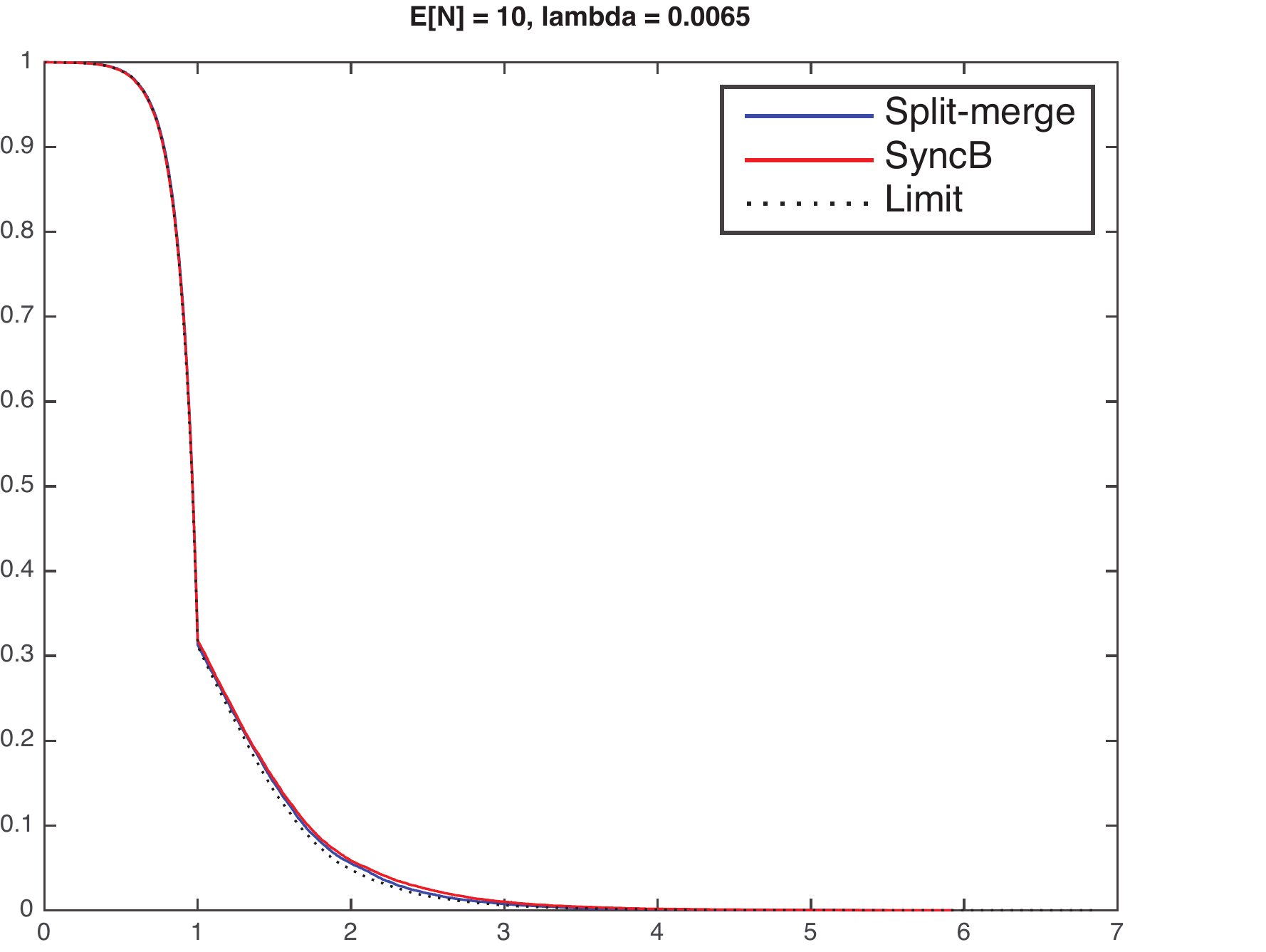}}
\put(50,193){\small \bf Sojourn time tail distribution}
\put(65,180){\footnotesize $E[N] = 2$, $\beta = 2/3$, $\lambda n = 100$}
\put(295,193){\small \bf Sojourn time tail distribution}
\put(310,180){\footnotesize $E[N] = 10$, $\beta = 6$, $\lambda n = 6.5$}
\put(0,60){\rotatebox{90}{\scriptsize P(Sojourn time $> x$)}}
\put(245,60){\rotatebox{90}{\scriptsize P(Sojourn time $> x$)}}
\put(110,0){\footnotesize $x$}
\put(360,0){\footnotesize $x$}
\end{picture}
\caption{Sojourn time tail distribution. Simulated values of the tail distributions for the SyncB and Split-Merge models. In both cases, the number of servers is $n = 1000$, the arrival rate of jobs is $\lambda n$, indicated on the plots, the service requirements of the pieces of a job are i.i.d.~U$(0,1)$, and the number of pieces is computed as $\hat N = N \wedge n$, with $N-1$ a mixed Poisson random variable with Pareto$(\alpha, \beta)$ rate, $\alpha = 3$, and $\beta$ indicated on the plots. Both simulations were run using the same arrivals and jobs for a total of 30,000 jobs. The tail distribution of the limiting sojourn time $T$ is provided for comparison.} \label{F.SojournTail}
\end{figure}

Our second set of results compare the tail distributions of the sojourn times in the SyncB and Split-Merge models. Figure~\ref{F.SojournTail} depicts two comparisons, one for $E[N] = 2$ and one for $E[N] = 10$. The parameters used in both plots are within the stability region for the SyncB model. As can be seen from the figure, the distributions of the SyncB and Split-Merge models are undistinguishable, which strongly supports the use of the SyncB model for approximating the mathematically intractable Split-Merge model. In other words, SyncB seems to provide an accurate model for MapReduce with random routing and FCFS scheduling, under light loads. We have also included the tail distribution of the limiting sojourn time \eqref{eq:SojournLimit}, computed using the algorithm in \cite{Chen_Olv_15}. The approximation works best for small values of $E[N]$ and light loads (i.e., $\lambda E[N] E[\chi]$ small).

\begin{figure}[h]
\begin{center}
\begin{picture}(200,200)
\put(10,5){\includegraphics[scale = 0.5, bb = 0 0 430 360, clip]{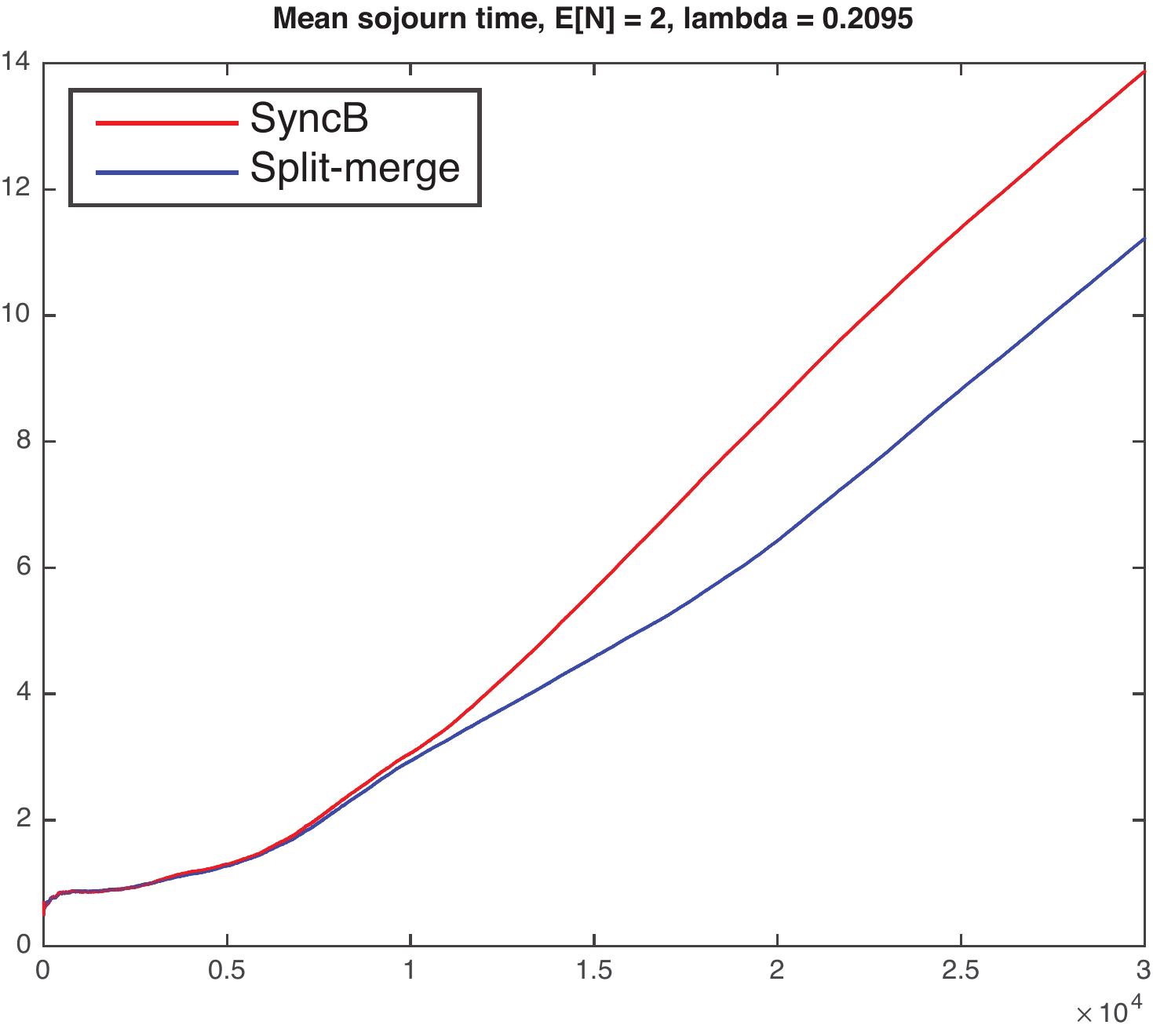}}
\put(15,195){\small \bf Mean sojourn time at stability boundary}
\put(85,0){\footnotesize Number of jobs $(t)$}
\end{picture}
\caption{Mean sojourn time. Simulated values of the average sojourn time of the first $t$ jobs for the SyncB and Split-Merge models. In both cases, the number of servers is $n = 1000$,  the arrival rate of jobs is $\lambda n$, with $\lambda = 0.2095$, the service requirements of the pieces of a job are i.i.d.~U$(0,1)$,  and the number of pieces is computed as $\hat N = N \wedge n$, with $N-1$ a mixed Poisson random variable with Pareto$(\alpha, \beta)$ rate, $\alpha = 3$, and $\beta = 2/3$ ($E[N] = 2$). The parameters are such that the SyncB model is at its theoretical boundary of stability. Both simulations were run using the same arrivals and jobs for a total of 30,000 jobs.} \label{F.Unstable}
\end{center}
\end{figure}

The last numerical result in the paper compares the SyncB and Split-Merge models right at the theoretical stability boundary of the SyncB model. Figure \ref{F.Unstable} plots the running average of the sojourn times of the first $t$ jobs, for up to 30,000 jobs. The plot is consistent with both models being unstable, and the important insight we obtain is that the stability regions for the two models seem to be very close, if not the same. This observation provides further support for the use of SyncB as a qualitatively good approximation for Split-Merge.

\bigskip
\section{Concluding remarks} \label{S.Conclusions}

The model presented in this paper captures the complexity of a large network of parallel servers with blocking and synchronization constraints. Although not an exact model for any of today's existing distributed computing algorithms, our model provides a good approximation for split-merge queues, which can be used to model the popular MapReduce algorithm under random routing and FCFS scheduling. More importantly, our model is analytically tractable, and hence provides a valuable benchmark for studying other distributed queueing models. In particular, one can think of the random routing in our model (i.e., the way in which job pieces are assigned to randomly selected servers) as a ``blind" scheduling rule, since it requires no information about the availability or the workload of any of the servers. On the other extreme, the model in \cite{Green_80} corresponds to the ``optimal" scheduling rule, where job fragments are assigned to the servers with the smallest workloads. It follows that the ``blind" and ``optimal" models can be used in conjunction to provide a good cost-benefit analysis of any other scheduling rule (e.g., send to idle servers first, or select twice as many servers as needed and choose those with the smallest workloads/number of jobs). In other words, our model can be used to price server information.  Furthermore, our model yields valuable insights about general split-merge queues, which are important since they provide upper bounds for various performance measures in today's popular fork-join networks. In particular, the unusual and rather stringent stability condition of our model highlights how critical the scheduling discipline is, since the ``optimal" model in \cite{Green_80} is stable under the usual load condition (arrival rate divided by service rate strictly smaller than one). Hence, the analysis in this paper motivates the search for  ``easily implementable" scheduling rules that can recover the weaker stability condition.

\section{Proofs} \label{S.Proofs}

This section  contains the proofs of Theorem \ref{T.Main} and Theorem \ref{T.CramerLundberg}.  To ease the exposition we separate Theorem \ref{T.Main} into two parts, the first one concerning the existence of a stationary waiting time for a fixed number of servers and a fixed arrival rate of jobs, the second one establishing the limiting distribution of the stationary waiting time as the number of servers and the arrival rate of jobs grow to infinity. 

We start by summarizing some of the notation that will be used throughout this section, starting with all the random variables involved in the predecessor graph. Let 
$$\hat U_0 = 0, \qquad \hat U_r = \max_{{\bf j} \in \hat A_r} \, \hat S_{\bf j}, \quad r \geq 1,$$
where $\hat S_{\bf j} = \hat X_{{\bf j}|1} + \hat X_{{\bf j} |2} + \dots + \hat X_{\bf j}$, $\hat X_{\bf i} = \chi_{\bf i} - \hat \tau_{\bf i}$, and $\hat A_r$ is the set of labels (not jobs) in the predecessor graph at graph distance $r$ from the tagged job. Note that $|\mathcal{B}_{\emptyset,r}| \leq |\hat A_r|$, since $\mathcal{B}_{\emptyset,r}$ refers to the set of labels in $\mathcal{G}_n(t_0)$, where there are some jobs/fragments that do not have any predecessors, and as $t_0 \to - \infty$ all jobs/fragments will have have one. We also point out that since every time multiple paths in the graph merge (i.e., every time a job that is an immediate predecessor to multiple fragments arrives) all the jobs from that point onwards will have multiple labels, then some of the $\{ \hat N_{\bf i} \}$ will be repeated, and are therefore not independent.  Similarly, the $\{ \hat \tau_{\bf i}\}$ correspond to the inter arrival times between jobs in the predecessor graph (the length of the edges), and are therefore, in general, neither independent of each other nor of the $\{\hat N_{\bf i} \}$. More precisely, the marginal distribution of each of the $\hat \tau_{\bf i}$ is exponential with rate $\lambda_n^*$, but conditionally on knowing that ${\bf i}$ shares a predecessor with one or more other jobs, its rate changes and all the inter arrival times corresponding to edges that merge into the same job become dependent. Finally, the $\{\chi_{\bf i}\}$ are identically distributed with marginal distribution $B$, and their dependance with the $\{ \hat \tau_i\}$ and $\{\hat N_{\bf i}\}$ is limited to the multiplicity of the labels (i.e., labels referring to edges that lie on merged paths have the same service requirements). Nonetheless, service requirements of the form $\chi_{\bf i}$ and $\chi_{\bf j}$ with ${\bf i} \neq {\bf j}$ may also be dependent if they correspond to fragments of the same job.  We will identify labels belonging to the same job in the predecessor graph through the equivalence relation ${\bf i} \sim {\bf j}$.

The analysis of the predecessor graph will become tractable once we identify suitable approximations where the merging of paths due to common predecessors does not occur, in other words, where the predecessor graph is truly a tree. Since these approximations will be used several times in the proofs it is convenient to define them upfront. 

In general, we will use $\tilde N$ to refer to a random variable having distribution $f_n$, $N$ to refer to a random variable having distribution $f$, $\chi$ to refer to a random variable having distribution $B$, $\tilde \tau$ to denote an exponential random variable with rate $\lambda_n^*$, and $\tau$ to denote an exponential random variable with rate $\lambda^*$.

Let $\{ \tilde N_{\bf i} \}$, denote a sequence of i.i.d. copies of $\tilde N$, $\{ \chi_{\bf i}\}$ an i.i.d. sequence of copies of $\chi$, and $\{ \tilde \tau_{\bf i} \}_{{\bf i} \in U}$ an i.i.d. sequence of copies of $\tilde \tau$, all independent of each other. Use the $\{ \tilde N_{\bf i}\}$ to define a branching process by setting $\tilde A_0 = \{ \emptyset\}$ and $\tilde A_r = \{ ({\bf i}, i_r): {\bf i} \in \tilde A_{r-1}, \, 1\leq i_r \leq \tilde N_{\bf i}\}$ for $r \geq 1$. Next, set $\tilde X_{\bf i} = \chi_{\bf i} - \tilde \tau_{\bf i}$, $\tilde S_{\bf j} = \tilde X_{{\bf j}|1} + \tilde X_{{\bf j} |2} + \dots + \tilde X_{\bf j}$ and define
$$\tilde U_0 = 0, \qquad \tilde U_r = \max_{{\bf j} \in \hat A_r} \, \tilde S_{\bf j}, \quad r \geq 1.$$

Similarly, by repeating the construction given above after removing the $^\sim$ from all the random variables we obtain
$$U_0 = 0, \qquad  U_r = \max_{{\bf j} \in A_r} \, S_{\bf j}, \quad r \geq 1,$$
where  $S_{\bf j} = X_{{\bf j}|1} + X_{{\bf j} |2} + \dots +  X_{\bf j}$ and $X_{\bf i} = \chi_{\bf i} - \tau_{\bf i}$. We point out that if $f_n = f$ for all $n \geq n_0$, then there is no difference between all the $^\sim$ random variables and those without it. 

We are now ready to prove the first part of Theorem \ref{T.Main}.

\bigskip

\begin{proof}[Proof of Theorem \ref{T.Main} {\bf (Stability)}]
We need to show that provided $E[\hat N] E\left[ e^{\beta (\chi - \hat \tau)} \right] < 1$ the limit $\lim_{t_0 \to -\infty} W_\emptyset^{(n,t_0)}$ exists and is finite a.s. To this end, note that since
\begin{equation} \label{eq:AlltimeMax}
W_\emptyset^{(n,t_0)} = \bigvee_{r=0}^{\kappa} \bigvee_{{\bf j} \in \mathcal{B}_{\emptyset,r}} \left( \hat X_{{\bf j}|1} + \hat X_{{\bf j}|2} + \dots + \hat X_{\bf j} \right),
\end{equation}
and as $t_0 \to -\infty$ we have that $\kappa \to \infty$ a.s. and $\mathcal{B}_{\emptyset,r} \uparrow \hat A_r$, it follows by monotone convergence that
$$\lim_{t_0 \to -\infty} W_{\emptyset}^{(n, t_0)}  = \bigvee_{r=0}^\infty \hat U_r  \triangleq W^{(n)} \qquad \text{a.s.}$$
Therefore, it only remains to verify that $W^{(n)} < \infty$ a.s. 

To establish the finiteness of $W^{(n)}$ we note that it suffices to show that
$$P( \hat U_r > 0 \text{ i.o.} ) = 0.$$ 
This in turn will follow from the Borel-Cantelli Lemma once we show that
\begin{equation} \label{eq:GeometricBound}
P(\hat U_r > 0 ) \leq c^r
\end{equation}
for some constant $0 < c < 1$. Therefore, we focus on showing \eqref{eq:GeometricBound}.

Recall from the observations made at the beginning of this section that the $\{\hat \tau_{\bf i} \}_{{\bf i} \in \mathcal{G}_n(t_0)}$ are neither i.i.d. nor independent of the $\{\hat N_{\bf i}\}_{{\bf i} \in \mathcal{G}_n(t_0)}$. More precisely, recall that for each piece of a job requiring service at server $s_i$ there are $\binom{n-1}{k-1} k!$ possible immediate predecessors of size $k$ (i.e., jobs that also require service from server $s_i$). Since the arrival of jobs is assumed to follow a Poisson process, this leads in \eqref{eq:Lambda_n} to the inter arrival time between a fixed piece of a job and its unique immediate predecessor to be exponentially distributed with rate $\lambda_n^*$. The problem arises when the piece of a job has two or more immediate predecessors, in which case the rate for the exponential changes. 

Consider an arrival that is predecessor to two jobs (or two pieces of the same job), $j_1$ and $j_2$, and note that there must be two different servers, say $s_{i_1}$ and $s_{i_2}$, that are required by the arriving job and that are also assigned to jobs $j_1$ and $j_2$, respectively.  There are only $\binom{n-2}{k-2} k!$ possible jobs of size $k$ requiring service by servers $s_{i_1}$ and $s_{i_2}$, and therefore, the rate at which such a predecessor arrives is given by 
$$\lambda_n^{(2)} =\sum_{k=2}^{m_n} \lambda_k \binom{n-2}{k-2} k!.$$
In general, a job that is predecessor to jobs $j_1, j_2, \dots, j_r$ in the graph arrives at a rate
$$\lambda_n^{(r)} = \sum_{k=r}^{m_n} \lambda_k \binom{n-r}{k-r} k! \leq \sum_{k=1}^{m_n} \lambda_k \binom{n-1}{k-1} k! = \lambda_n^*.$$

As for the lack of independence between the $\{ \hat \tau_{\bf i}\}$, note that the inter arrival times between pieces of jobs that have a common immediate predecessor are dependent. The sequence $\{ \hat \tau_{\bf i}\}$ is also dependent on the $\{\hat N_{\bf i}\}$, since a large number of jobs awaiting for a predecessor to arrive increases the probability of an arriving job being predecessor to two or more pieces at a time. Hence, the analysis of $P(\hat U_r > 0)$ needs some care. 

We start by using Markov's inequality to obtain
\begin{align*}
P\left( \hat U_r > 0 \right) &= P\left( \max_{{\bf j} \in \hat A_r}  \hat S_{\bf j} > 0 \right) \leq E\left[ \bigvee_{{\bf j} \in \hat A_r} e^{\beta \hat S_{\bf j}}  \right] \leq E\left[ \sum_{{\bf j} \in \hat A_r} e^{\beta \hat S_{\bf j} } \right] .
\end{align*}
Now rewrite the last expectation as follows
\begin{align*}
E\left[ \sum_{{\bf j} \in \hat A_r} e^{\beta \hat S_{\bf j} } \right] &= \sum_{{\bf j} \in \mathbb{N}_+^r} E\left[  e^{\beta S_{\bf j} } 1( {\bf j} \in \hat A_r) \right],
\end{align*}
and notice that
$$1({\bf j} \in \hat A_r)  = \prod_{k=0}^{r-1} 1( j_{k+1} \leq \hat N_{{\bf j}|k}),$$
and is therefore independent of the $\{ \hat \tau_{{\bf j}|k} \}_{k=1}^{r}$. Since all labels along a path correspond to different jobs, then the vectors $\{ (\hat N_{{\bf j}|k}, \chi_{({\bf j}|k, 1)}, \dots, \chi_{({\bf j}|k, \hat N_{{\bf j}|k})} ) \}_{k=0}^{r-1}$ are i.i.d. copies of $(\tilde N, \chi_1, \dots, \chi_{\tilde N})$, where the $\{\chi_i\}$ are i.i.d. copies of $\chi$, independent of $\tilde N$. To eliminate the dependence between this last sequence and the inter arrival times note that we can replace the $\{ \hat \tau_{{\bf j}|k} \}_{k=1}^{r}$ with i.i.d. copies of $\tilde \tau$, independent of $\{ (\hat N_{{\bf j}|k}, \chi_{({\bf j}|k, 1)}, \dots, \chi_{({\bf j}|k, \hat N_{{\bf j}|k})} ) \}_{k=0}^{r-1}$ to obtain
$$e^{\beta \hat S_{\bf j} } 1( {\bf j} \in \hat A_r) \leq_{\text{s.t.}} e^{\beta \tilde S_{\bf j} } 1({\bf j} \in \tilde A_r),$$
where $\leq_{\text{s.t.}}$ denotes the usual stochastic order, and the $\{ \tilde S_{\bf i}\}$ were defined at the beginning of this section. It follows that 
\begin{equation} \label{eq:ExpectationBound} 
P\left( \hat U_r > 0 \right) \leq E\left[ \sum_{{\bf j} \in \hat A_r} e^{\beta \hat S_{\bf j} } \right] \leq E\left[ \sum_{{\bf j} \in \tilde A_r} e^{\beta \tilde S_{\bf j} } \right],
\end{equation}
where the last expectation can be computed using standard weighted branching processes arguments (see, e.g., \cite{Jel_Olv_12a}) and is given by
\begin{equation} \label{eq:BranchingExpectation}
E\left[ \sum_{{\bf j} \in \tilde A_r} e^{\beta \tilde S_{\bf j} } \right] = \left( E\left[ \sum_{i=1}^{\tilde N} e^{\beta (\chi_i - \tilde \tau_i)} \right] \right)^r = \left( E[\tilde N] E\left[ e^{\beta(\chi - \tilde \tau)} \right] \right)^r.
\end{equation}
Setting $c = E[\tilde N] E\left[ e^{\beta(\chi - \tilde \tau)} \right] < 1$ completes the proof. 
\end{proof}

\bigskip

For the second part of the main theorem we will prove that 
$$W^{(n)} \stackrel{W_p}{\longrightarrow} W \qquad \text{as } n \to \infty$$
for any $p \geq 1$, where $W_p$ denotes the Wasserstein distance of order $p$ (see, e.g., \cite{Villani_2009}, Chapter 6). This is equivalent to convergence in distribution plus convergence of all the moments of order up to $p$ (see Theorem 6.8 in \cite{Villani_2009}).

To this end, we will consider three different sets of processes that will yield intermediate approximations between $W^{(n)}$ and $W$. In particular, we will show that if $\mu_n$ is the probability measure of $W^{(n)}$, $\hat \nu_k$ is the probability measure of $\bigvee_{r=0}^k \hat U_r$, $\tilde \nu_k$ is that of $\bigvee_{r=0}^k \tilde U_r$, $\nu_k$ of $\bigvee_{r=0}^k U_r$, and, finally, $\mu$ is the probability measure of 
$$W = \bigvee_{r=0}^\infty U_r,$$
then
\begin{equation} \label{eq:TriangleIneq}
W_p(\mu_n, \mu) \leq W_p(\mu_n, \hat \nu_{r_n}) + W_p(\hat \nu_{r_n}, \tilde \nu_{r_n}) + W_p(\tilde \nu_{r_n}, \nu_{r_n}) + W_p(\nu_{r_n}, \mu) \to 0
\end{equation}
as $n \to \infty$, for some $r_n \to \infty$. 

The technical difficulty in the proofs lies in the need to construct explicit couplings of the pairs of probability measures involved for which we can show that their difference converges to zero in $L_p$ norm. We point out that although $W^{(n)}$ and $\bigvee_{r=0}^{r_n} \hat U_r$, as well as $W$ and $\bigvee_{r=0}^{r_n} U_r$, are naturally defined on the same probability space, all other pairs are not. The proof of the many servers limit part of Theorem \ref{T.Main} is based on a series of results.

\begin{lemma} \label{L.LpConvergence}
Suppose Assumption \ref{A.Nconditions} (i)-(ii) is satisfied.  Then, for any $r_n \to \infty$ as $n \to \infty$, and any $p \geq 1$, we have that
$$\lim_{n \to \infty} E\left[ \left| W^{(n)} - \bigvee_{r=0}^{r_n} \hat U_r \right|^p \right] = 0 \qquad \text{and} \qquad \lim_{n \to \infty} E\left[ \left| W - \bigvee_{r=0}^{r_n} U_r \right|^p \right] = 0.$$
In particular, this implies that, as $n \to \infty$, 
$$W_p(\mu_n, \hat \nu_{r_n}) \to 0 \qquad \text{and} \qquad W_p(\nu_{r_n}, \mu) \to 0.$$
\end{lemma}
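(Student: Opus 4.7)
The plan is to reduce both limits to geometric $L^p$ bounds on the single-generation quantities $\hat U_r^+$ and $U_r^+$, and then to conclude by Minkowski's inequality. Since $\hat U_0 = 0$, the partial supremum $\bigvee_{r=0}^{r_n} \hat U_r$ is nonnegative, so whenever $W^{(n)}$ strictly exceeds it one must have $\bigvee_{r > r_n} \hat U_r > 0$. This yields the pointwise bound
$$0 \leq W^{(n)} - \bigvee_{r=0}^{r_n} \hat U_r \leq \bigvee_{r > r_n} \hat U_r^+ \leq \sum_{r > r_n} \hat U_r^+,$$
and an identical inequality holds with the hats removed.

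To control each $\hat U_r^+$, I will combine the elementary inequality $x^p \leq (p!/\beta^p) e^{\beta x}$, valid for $x \geq 0$, with $e^{\beta \hat U_r} = \max_{{\bf j} \in \hat A_r} e^{\beta \hat S_{\bf j}} \leq \sum_{{\bf j} \in \hat A_r} e^{\beta \hat S_{\bf j}}$, to obtain
$$(\hat U_r^+)^p \leq \frac{p!}{\beta^p} \sum_{{\bf j} \in \hat A_r} e^{\beta \hat S_{\bf j}}.$$
The stochastic-domination calculation already carried out in the stability part of Theorem~\ref{T.Main}---replacing the predecessor-graph edges by those of the i.i.d.\ weighted branching tree with offspring $\tilde N$ and edge weights $\chi_i - \tilde \tau_i$---then gives
$$E\left[\sum_{{\bf j} \in \hat A_r} e^{\beta \hat S_{\bf j}}\right] \leq \left(E[\tilde N]\, E\left[e^{\beta(\chi - \tilde \tau)}\right]\right)^r = \left(\frac{\lambda_n^*}{\lambda_n^* + \beta}\, E[\tilde N]\, E\left[e^{\beta\chi}\right]\right)^r.$$
By Assumption~\ref{A.Nconditions}(i), $E[\tilde N] \to E[N]$ and hence $\lambda_n^* \to \lambda^*$; combined with the strict inequality in Assumption~\ref{A.Nconditions}(ii), there exist $c \in (0,1)$ and $n_0$ such that the bracketed quantity is at most $c$ for every $n \geq n_0$, and therefore $E[(\hat U_r^+)^p] \leq (p!/\beta^p)\, c^r$ for all $n \geq n_0$ and $r \geq 0$.

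Minkowski's inequality then yields, uniformly in $n \geq n_0$,
$$E\left[\left(W^{(n)} - \bigvee_{r=0}^{r_n} \hat U_r\right)^p\right]^{1/p} \leq \sum_{r > r_n} \left(\frac{p!}{\beta^p}\, c^r\right)^{1/p} \leq \frac{(p!)^{1/p}}{\beta\,(1 - c^{1/p})}\, c^{(r_n+1)/p},$$
which vanishes as $r_n \to \infty$. The second limit is proved identically: the $\{U_r\}$ live on a genuine weighted branching tree, so the same chain of inequalities applies verbatim with the geometric constant $E[N]\, E\left[e^{\beta(\chi - \tau)}\right] = \frac{\lambda^*}{\lambda^* + \beta}\, E[N]\, E[e^{\beta\chi}] < 1$ taken directly from Assumption~\ref{A.Nconditions}(ii); this argument also delivers $E[W^p] < \infty$ en passant. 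The Wasserstein statements are then immediate, since $\left(W^{(n)}, \bigvee_{r=0}^{r_n} \hat U_r\right)$ and $\left(W, \bigvee_{r=0}^{r_n} U_r\right)$ are valid couplings of the respective marginals, so the $L^p$ bounds above dominate $W_p(\mu_n, \hat \nu_{r_n})$ and $W_p(\nu_{r_n}, \mu)$ respectively.

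I expect the only subtle point to be the \emph{uniformity in $n$} of the geometric decay constant $c_n = E[\tilde N]\, E[e^{\beta(\chi - \tilde \tau)}]$: this is exactly where both parts of Assumption~\ref{A.Nconditions} enter, (i) supplying $c_n \to c_* = \frac{\lambda^*}{\lambda^*+\beta}\, E[N]\, E[e^{\beta\chi}]$ and (ii) ensuring $c_* < 1$, so that a single $c < 1$ dominates $c_n$ for all large $n$. The complicated dependence structure within the predecessor graph (common predecessors, duplicated labels) contributes nothing new, as it has already been neutralized by the stochastic-domination step borrowed from the stability proof.
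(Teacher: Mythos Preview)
Your argument is correct and follows essentially the same route as the paper: bound the difference pointwise by the tail supremum $\bigvee_{r>r_n}\hat U_r^+$, control $(\hat U_r^+)^p$ through $e^{\beta\hat U_r}\leq\sum_{{\bf j}\in\hat A_r}e^{\beta\hat S_{\bf j}}$, invoke the stochastic-domination estimate \eqref{eq:ExpectationBound}--\eqref{eq:BranchingExpectation} from the stability proof, and use Assumption~\ref{A.Nconditions}(i)--(ii) to make the geometric rate $E[\tilde N]E[e^{\beta(\chi-\tilde\tau)}]$ uniformly less than one for large $n$. The only cosmetic differences are that the paper bounds $\bigvee_r(\hat U_r^+)^p$ directly by $\sum_r(\hat U_r^+)^p$ rather than applying Minkowski, and obtains the constant in front via the tail-integral $E[(\hat U_r^+)^p]=\int_0^\infty P(e^{\beta\hat U_r}>e^{\beta x^{1/p}})\,dx$ instead of your elementary inequality $x^p\leq(\Gamma(p+1)/\beta^p)e^{\beta x}$.
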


\begin{proof}
Let $\beta > 0$ be the one from Assumption \ref{A.Nconditions} (ii) and let $\rho_\beta = E[N] E\left[ e^{\beta (\chi - \tau)} \right] < 1$. Fix $0 < \epsilon < 1 - \rho_\beta$ and note that
\begin{align*}
E[\tilde N ] E\left[ e^{\beta (\chi - \tilde \tau)} \right] &= E[\tilde N] E\left[ e^{\beta \chi } \right] \left( \frac{\lambda_n^*}{\lambda_n^* + \beta} \right) .
\end{align*}
By Assumption \ref{A.Nconditions} (i) we have that $E[\tilde N] \to E[N]$, and therefore $\lambda_n^* \to \lambda^*$ as $n \to \infty$. Hence, 
\begin{equation} \label{eq:LimitingRho}
\lim_{n \to \infty} E[\tilde N] E\left[ e^{\beta (\chi - \tilde \tau)} \right] = \rho_\beta.
\end{equation}
It follows that for large enough $n$, 
$$E[\tilde N ] E\left[ e^{\beta (\chi - \tilde \tau)} \right]  \leq \rho_\beta + \epsilon < 1.$$

Next, note that
\begin{align*}
E\left[ \left| W^{(n)} - \bigvee_{r=0}^{r_n} \hat U_r \right|^p \right] &= E\left[ \left( \left( \bigvee_{r=r_n+1}^\infty \hat U_r -  \bigvee_{r=0}^{r_n} \hat U_r \right)^+ \right)^p \right] \leq E\left[ \left( \left( \bigvee_{r=r_n+1}^\infty \hat U_r \right)^+ \right)^p \right] \\
&= E\left[  \bigvee_{r=r_n+1}^\infty \left( \hat U_r^+  \right)^p \right] \leq \sum_{r=r_n+1}^\infty E\left[ \left( \hat U_r^+  \right)^p \right].
\end{align*}
To analyze the last expectation note that by Markov's inequality,
\begin{align*}
E\left[ \left( \hat U_r^+  \right)^p \right] &= \int_0^\infty P\left( \left( \hat U_r^+  \right)^p > x \right) dx = \int_0^\infty P\left( e^{\beta \hat U_r} > e^{\beta x^{1/p}} \right) dx \\
&\leq E\left[ e^{\beta \hat U_r} \right] \int_0^\infty e^{-\beta x^{1/p}} dx = E\left[ \bigvee_{{\bf j} \in \hat A_r} e^{\beta \hat S_{\bf j}} \right] \frac{p}{\beta^p} \int_0^\infty u^{p-1} e^{-u} du,
\end{align*}
where $\int_0^\infty u^{p-1} e^{-u} du = E[\xi^{p-1}] < \infty$ with $\xi$ exponentially distributed with rate one. Letting $C_{\beta,p} = p E[\xi^{p-1}]/\beta^p$ gives
$$E\left[ \left( \hat U_r^+  \right)^p \right] \leq C_{\beta,p} \, E\left[ \bigvee_{{\bf j} \in \hat A_r} e^{\beta \hat S_{\bf j}} \right] \leq C_{\beta,p} \, E\left[ \sum_{{\bf j} \in \hat A_r} e^{\beta \hat S_{\bf j}} \right].$$
Moreover, as shown in the proof of the stability part of Theorem \ref{T.Main} (see \eqref{eq:ExpectationBound} and \eqref{eq:BranchingExpectation}), we have
$$E\left[ \sum_{{\bf j} \in \hat A_r} e^{\beta \hat S_{\bf j}} \right] \leq  E\left[ \sum_{{\bf j} \in \tilde A_r} e^{\beta \tilde S_{\bf j}} \right] =  \left( E[\tilde N ] E\left[ e^{\beta (\chi - \tilde \tau)} \right]   \right)^r.$$ 
It follows that, for sufficiently large $n$,
$$\sum_{r=r_n+1}^\infty E\left[ \left( \hat U_r^+ \right)^p \right] \leq C_{\beta,p} \sum_{r=r_n+1}^\infty \left( E[\tilde N ] E\left[ e^{\beta (\chi - \tilde \tau)} \right]  \right)^r \leq C_{\beta,p} \sum_{r=r_n+1}^\infty (\rho_\beta+\epsilon)^r = O\left( (\rho_\beta+\epsilon)^{r_n} \right)$$
as $n \to \infty$, since $\rho_\beta + \epsilon < 1$. 

The proof involving $W$ and the $\{U_r\}_{r \geq 0}$ is essentially the same and is therefore omitted. 
\end{proof}

\bigskip
 
The following result regarding the contribution of all the paths with multiple labels in the predecessor graph is the most technical one in the paper, since it is where the subtle dependence introduced by the merging of paths plays a role. 

\begin{lemma} \label{L.RedundantLabels}
For $r \geq 1$ define $M_r = \{ {\bf i} \in \hat A_r: {\bf i} \sim {\bf j} \text{ for some }{\bf j} \neq {\bf i} \}$ to be the set of labels  in the predecessor graph at graph distance $r$ from the tagged job belonging to jobs with multiple labels. Then, for any $\beta > 0$ and $\tilde \rho_\beta = E[\tilde N] E\left[ e^{\beta(\chi  -\tilde \tau)} \right]$, 
$$E\left[ \sum_{{\bf i} \in M_r} e^{\beta \hat S_{\bf i}} \right]  \leq  \frac{r}{n} E\left[ \tilde N^2 \right]  (\tilde \rho_\beta)^{r}  .$$
\end{lemma}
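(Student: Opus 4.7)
The plan is to partition $M_r$ according to the earliest graph-distance $\ell\in\{1,\dots,r\}$ at which a merging first appears along the path from the tagged job to the label in question, and then to bound each piece by combining the branching-tree estimate used in the stability proof (see \eqref{eq:ExpectationBound}--\eqref{eq:BranchingExpectation}) with the common-predecessor rate calculation from the opening of this section. The structural fact that unlocks this decomposition is that \emph{multiplicity propagates down the tree}: if ${\bf i}|\ell\sim{\bf j}$ with ${\bf j}\neq{\bf i}|\ell$, then for every $k\in\{\ell,\dots,r\}$ the concatenation $({\bf j},i_{\ell+1},\dots,i_k)$ is a distinct label for the same job as ${\bf i}|k$. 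Hence each ${\bf i}\in M_r$ admits a well-defined first merging depth $\ell({\bf i})=\min\{\ell:{\bf i}|\ell\in M_\ell\}$, and writing $M_r^{(\ell)}=\{{\bf i}\in M_r:\ell({\bf i})=\ell\}$ gives the disjoint decomposition $M_r=\bigsqcup_{\ell=1}^{r}M_r^{(\ell)}$ and the identity
\[
E\!\left[\sum_{{\bf i}\in M_r}e^{\beta\hat S_{\bf i}}\right]=\sum_{\ell=1}^{r}E\!\left[\sum_{{\bf i}\in M_r^{(\ell)}}e^{\beta\hat S_{\bf i}}\right].
\]

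For each fixed $\ell$, I would split the weight $e^{\beta\hat S_{\bf i}}$ into three factors corresponding to generations $1\le k\le\ell-1$, $k=\ell$, and $\ell+1\le k\le r$, and bound them separately. For ${\bf i}\in M_r^{(\ell)}$ the prefix $\emptyset\to{\bf i}|(\ell-1)$ is tree-like, so along this prefix the $\hat N_{{\bf i}|k}$ are i.i.d.\ copies of $\tilde N$ and the $\hat\tau_{{\bf i}|k}$ can be dominated by i.i.d.\ $\tilde\tau$'s; repeating the argument of \eqref{eq:ExpectationBound}--\eqref{eq:BranchingExpectation} bounds the pre-merging factor by $(\tilde\rho_\beta)^{\ell-1}$. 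Similarly, the subtree below ${\bf i}|\ell$ contributes a post-merging factor of at most $(\tilde\rho_\beta)^{r-\ell}$. At step $\ell$ the job at ${\bf i}|\ell$ is, by definition of $\ell({\bf i})$, a common immediate predecessor of the piece $i_\ell$ of ${\bf i}|(\ell-1)$ and of some other pending piece already in the graph. The opening of this section shows that a common predecessor at any fixed pair of servers arrives at rate $\lambda_n^{(2)}=\lambda E[\tilde N(\tilde N-1)]/(n-1)$, while arrivals at a single server occur at rate $\lambda_n^*=\lambda E[\tilde N]$; so conditional on the history through generation $\ell-1$ the probability that the predecessor identified for $i_\ell$ coincides with any specific other pending piece is at most $\lambda_n^{(2)}/\lambda_n^*\le E[\tilde N^2]/(n\,E[\tilde N])$. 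A union bound over candidate partner pieces supplies a $1/n$ factor together with $E[\tilde N^2]/E[\tilde N]$; combined with the merging-edge Laplace weight $E[e^{\beta\tilde X}]=\tilde\rho_\beta/E[\tilde N]$ and the two tree factors above, and absorbing the partner count into the branching sum at the appropriate generation, one obtains $E[\sum_{{\bf i}\in M_r^{(\ell)}}e^{\beta\hat S_{\bf i}}]\le\frac{1}{n}E[\tilde N^2](\tilde\rho_\beta)^r$. Summing over $\ell\in\{1,\dots,r\}$ yields the claim.

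The main obstacle is the careful dependence accounting at the merging step. The inter-arrival time $\hat\tau_{{\bf i}|\ell}$ at a merged predecessor is not marginally $\mathrm{Exp}(\lambda_n^*)$ once one conditions on coincidence with another pending piece, and the partner ${\bf j}\sim{\bf i}|\ell$ certifying membership in $M_\ell$ may a priori lie at any generation $s\le\ell$ (not necessarily at generation $\ell$), so one has to be careful to avoid double-counting partner labels across the decomposition. The propagation-of-multiplicity observation is precisely what confines attention to the \emph{first} merging along each path and keeps the bookkeeping compatible with the branching sum from the stability proof; the pairwise rate bound $\lambda_n^{(2)}/\lambda_n^*=O(E[\tilde N^2]/(n\,E[\tilde N]))$ is then what supplies the explicit $1/n$ factor per merging, yielding the linear-in-$r$ bound after summation.
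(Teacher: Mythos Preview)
Your decomposition by the first merging depth $\ell({\bf i})$ is exactly the one the paper uses: there one writes $1({\bf i}\in M_r)=\sum_{s=1}^r 1\bigl(({\bf i}|s-1)\in M_{s-1}^c,\ ({\bf i}|s)\in M_s\bigr)$ and bounds this by $\sum_{s=1}^r 1({\bf i}\in\hat A_r,\,\mathcal{C}_{{\bf i}|s})$, with $\mathcal{C}_{\bf j}$ the event that ${\bf j}$ is a common immediate predecessor of two or more pending pieces. So the outline is right.

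Two aspects of your execution differ from the paper, and the second is a genuine gap. First, the paper does not carry the Laplace weight through the decomposition. It observes that along any fixed path the $\chi_{{\bf i}|k}$ are i.i.d.\ and independent of the combinatorial event $\{{\bf i}\in M_r\}$, and that each $\hat\tau_{{\bf i}|k}$ is stochastically larger than an independent $\tilde\tau$ regardless of the conditioning, so
\[
E\bigl[e^{\beta\hat S_{\bf i}}\,1({\bf i}\in M_r)\bigr]\le\bigl(E[e^{\beta(\chi-\tilde\tau)}]\bigr)^r\,P({\bf i}\in M_r).
\]
This strips the entire weight off at once and reduces the problem to bounding $\sum_{\bf i}P({\bf i}\in M_r)$; your three-factor split with separate ``pre-merging'' and ``post-merging'' Laplace pieces is more work for the same payoff.

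Second, and more importantly, your rate-ratio argument at the merging step leaks a factor you never control. You bound the probability that the predecessor of a \emph{specific pair} of pending pieces coincides by $\lambda_n^{(2)}/\lambda_n^*$, and then invoke a ``union bound over candidate partner pieces'', claiming to ``absorb the partner count into the branching sum''. But that union bound introduces the random count $K_{\sigma_{{\bf i}|\ell}}-1$ of other pending pieces present when ${\bf i}|\ell$ arrives, and nothing in the branching sum along the path ${\bf i}$ absorbs it: $K$ is a functional of the \emph{entire} predecessor graph to date, not of the single path. The paper sidesteps this by conditioning instead on the size $\hat N_{{\bf i}|s}$ of the arriving predecessor and computing
\[
P\bigl(\mathcal{C}_{{\bf i}|s}\,\big|\,\hat N_{{\bf i}|s},\,K_{\sigma_{{\bf i}|s}}\bigr)
=\frac{\binom{K-1}{1}\binom{n-2}{\hat N_{{\bf i}|s}-2}}{\binom{K}{1}\binom{n-1}{\hat N_{{\bf i}|s}-1}}
=\frac{K-1}{K}\cdot\frac{\hat N_{{\bf i}|s}-1}{n-1}
\le\frac{\hat N_{{\bf i}|s}}{n},
\]
which is \emph{uniform in $K$}. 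The factor $\hat N_{{\bf i}|s}$ then pairs with the indicator $1(i_{s+1}\le\hat N_{{\bf i}|s})$ coming from $1({\bf i}\in\hat A_r)$; summing over $i_{s+1}$ produces $E[\tilde N^2]$, and the remaining $r-1$ generation indicators give $(E[\tilde N])^{r-1}$. That is where the second moment enters --- not through the averaged rate $\lambda_n^{(2)}$. If you replace your pairwise rate bound by this conditional combinatorial one, the $K$ disappears and the rest of your argument goes through unchanged.
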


\begin{proof}
We first write for $r \geq 1$, 
\begin{align*}
E\left[ \sum_{{\bf i} \in M_r} e^{\beta \hat S_{\bf i}} \right] &= \sum_{{\bf i} \in \mathbb{N}_+^r} E\left[ e^{\beta \hat S_{\bf i}} 1({\bf i} \in M_r) \right].
\end{align*}
Now note that along a path all jobs are different and therefore the service requirements $\{ \chi_{{\bf i}|k} \}_{k =1}^r$ are i.i.d. with distribution $B$, and are independent of the $\{ \hat N_{{\bf i}|k} \}_{k=0}^{r-1}$. The inter arrival times $\{ \hat \tau_{{\bf i}|k} \}_{k=1}^r$ do depend on the $\{ \hat N_{{\bf i}|k} \}_{k=0}^{r-1}$ in the sense that a large number of jobs in the predecessor graph at the time a job arrives increases its probability of being an immediate predecessor to more than one job, and therefore influences the rate of the corresponding $\hat \tau$. It follows that if we replace them by i.i.d. copies of $\tilde \tau$ independent of everything else, we obtain 
$$E\left[ e^{\beta \hat S_{\bf i}} 1({\bf i} \in M_r) \right] \leq \left( E\left[ e^{\beta (\chi - \tilde \tau)} \right] \right)^r P({\bf i} \in M_r).$$
To compute the last probability let $\mathcal{C}_{\bf i}$ denote the event that ${\bf i}$ is a common immediate predecessor to two or more jobs/fragments in the predecessor graph, and note that 
$$1({\bf i} \in M_r) = \sum_{s=1}^r 1( ({\bf i}|s-1) \in M_{s-1}^c, \, ({\bf i}|s) \in M_s) \leq \sum_{s=1}^r 1({\bf i} \in A_r, \, \mathcal{C}_{{\bf i}|s}) ,$$
and therefore,
\begin{align*}
\sum_{{\bf i} \in \mathbb{N}_+^r} P({\bf i} \in M_r) &\leq \sum_{s=1}^r \sum_{{\bf i} \in \mathbb{N}_+^{r}} E\left[ \prod_{k=0}^{r-1} 1(i_{k+1} \leq \hat N_{{\bf i}|k}) 1(\mathcal{C}_{{\bf i}|s} ) \right] .
\end{align*}

Next, let $\sigma_{\bf i } = T_1 - \hat \tau_{{\bf i}|1} - \dots - \hat \tau_{\bf i } $ denote the time at which job ${\bf i}$ arrived to the predecessor graph. Define $\mathcal{F}_t = \sigma( (\hat N_{\bf j}, s_{\bf j}, \chi_{\bf j} , \hat \tau_{\bf j} ): \sigma_{\bf j} > t )$ to be the sigma algebra containing the ``history" of the predecessor graph over the interval $(t, T_1]$, and note that $\mathcal{F}_{\sigma_{\bf i}}$ does not reveal whether $\mathcal{C}_{\bf i}$ occurred nor the value of $\hat N_{\bf i}$. Now note that for $1 \leq s \leq r-1$,
\begin{align*}
&\sum_{{\bf i} \in \mathbb{N}_+^{r}} E\left[ \prod_{k=0}^{r-1} 1(i_{k+1} \leq \hat N_{{\bf i}|k}) 1(\mathcal{C}_{{\bf i}|s} ) \right]  \\
&= \sum_{{\bf i} \in \mathbb{N}_+^{r}} E\left[ \prod_{k=0}^{s-1} 1(i_{k+1} \leq \hat N_{{\bf i}|k}) E\left[ \left. \prod_{k=s}^{r-1} 1(i_{k+1} \leq \hat N_{{\bf i}|k}) 1(\mathcal{C}_{{\bf i}|s} ) \right| \mathcal{F}_{\sigma_{{\bf i}|s}} \right] \right].
\end{align*}
Moreover, since $\{ \hat N_{{\bf i}|k} \}_{k = s}^{r-1}$ are independent of $\mathcal{F}_{\sigma_{{\bf i}|s}}$, then
\begin{align*} 
E\left[ \left. \prod_{k=s}^{r-1} 1(i_{k+1} \leq \hat N_{{\bf i}|k}) 1(\mathcal{C}_{{\bf i}|s} ) \right| \mathcal{F}_{\sigma_{{\bf i}|s}} \right] = E\left[ \left. 1(i_{s+1} \leq \hat N_{{\bf i}|s}) 1(\mathcal{C}_{{\bf i}|s} ) \right| \mathcal{F}_{\sigma_{{\bf i}|s}} \right] \prod_{k=s+1}^{r-1} P(i_{k+1} \leq \hat N_{{\bf i}|k}),
\end{align*}
with the convention that $\prod_{i=a}^b x_i \equiv 1$ if $a > b$. 

To analyze the last conditional expectation let $K_t$ be the number of pieces of jobs that are available at time $t$, where by available we mean that they they do not have an immediate predecessor in $(t, T_1]$. Note that the event $\mathcal{C}_{\bf j}$ is a function of $K_{\sigma_{\bf j}}$ and $\hat N_{\bf j}$ only; more precisely,  for any ${\bf j} \in \mathbb{N}_+^s$ we have
\begin{align*}
P(\mathcal{C}_{\bf j}  | \hat N_{\bf j}, K_{\sigma_{\bf j}} )  &=  \frac{ \binom{ K_{\sigma_{\bf j}} -1}{1} \binom{n-2}{\hat N_{\bf j} -2}}{\binom{ K_{\sigma_{\bf j}} }{1} \binom{n-1}{\hat N_{\bf j} -1}} \leq  \frac{\hat N_{\bf j}}{n}.
\end{align*}
It follows that
\begin{align*}
E\left[ \left.  1(j_{s+1} \leq \hat N_{\bf j}) 1( \mathcal{C}_{\bf j})  \right| \mathcal{F}_{\sigma_{\bf j} } \right] &= E\left[ \left.  1(j_{s+1} \leq \hat N_{\bf j}) P( \mathcal{C}_{\bf j} |\hat N_{\bf j}, K_{\sigma_{\bf j}})  \right| \mathcal{F}_{\sigma_{\bf j} } \right] \\
&\leq  E\left[ \left.1(j_{s+1} \leq \hat N_{\bf j}) \frac{\hat N_{\bf j}}{n} \right|  \mathcal{F}_{\sigma_{\bf j} } \right] = \frac{1}{n} E[\hat N_{\bf j} 1(j_{s+1} \leq \hat N_{\bf j})] . 
\end{align*}

It follows that, for $1 \leq s \leq r-1$, 
\begin{align*}
&\sum_{{\bf i} \in \mathbb{N}_+^{r}} E\left[ \prod_{k=0}^{r-1} 1(i_{k+1} \leq \hat N_{{\bf i}|k}) 1(\mathcal{C}_{{\bf i}|s} ) \right]  \\
&\leq \sum_{{\bf i} \in \mathbb{N}_+^r} E\left[ \prod_{k=0}^{s-1} 1(i_{k+1} \leq \hat N_{{\bf i}|k}) \right] \frac{1}{n} E[\hat N_{{\bf i}|s} 1(i_{s+1} \leq \hat N_{{\bf i}|s})] \prod_{k=s+1}^{r-1} P(i_{k+1} \leq \hat N_{{\bf i}|k}) \\
&= \frac{1}{n} \left( E[ \tilde N ] \right)^{r-1} E[ \tilde N^2 ].
\end{align*}

For $s = r$ note that the same arguments used above give
\begin{align*}
\sum_{{\bf i} \in \mathbb{N}_+^{r}} E\left[ \prod_{k=0}^{r-1} 1(i_{k+1} \leq \hat N_{{\bf i}|k}) 1(\mathcal{C}_{\bf i} ) \right] &= \sum_{{\bf i} \in \mathbb{N}_+^{r}} \prod_{k=0}^{r-1} P(i_{k+1} \leq \hat N_{{\bf i}|k}) P(\mathcal{C}_{\bf i}) \\
&\leq \sum_{{\bf i} \in \mathbb{N}_+^{r}} \prod_{k=0}^{r-1} P(i_{k+1} \leq \hat N_{{\bf i}|k}) \frac{E[ \hat N_{\bf i}]}{n}
= \frac{1}{n} \left( E[\tilde N] \right)^{r+1}.
\end{align*}
We conclude that
\begin{align*}
E\left[ \sum_{{\bf i} \in M_r} e^{\beta \hat S_{\bf i}} \right] &\leq \left( E\left[ e^{\beta(\chi-\tilde \tau)} \right] \right)^r \left( \sum_{s=1}^{r-1} \frac{1}{n} \left( E[ \tilde N ] \right)^{r-1} E[ \tilde N^2 ] +  \frac{1}{n} \left( E[\tilde N] \right)^{r+1}  \right) \\
&= \frac{1}{n} (\tilde \rho_\beta)^r \left( (r-1) \frac{E[ \tilde N^2]}{E[\tilde N]} + E[\tilde N] \right). 
\end{align*}
Noting that $1 \leq (E[\tilde N])^2 \leq E[\tilde N^2]$ completes the proof. 
\end{proof}

\bigskip

\begin{lemma} \label{L.MaximumsLp}
Let $X_1, X_2, Y_1, Y_2$ be nonnegative random variables and let $p \geq 1$. Then
$$\left( E\left[ \left| X_1 \vee X_2 - Y_1 \vee Y_2 \right|^p \right] \right)^{1/p} \leq \left( E\left[  X_2^p \right] \right)^{1/p} + \left( E\left[ Y_2^p \right] \right)^{1/p} + \left( E\left[ \left| X_1 - Y_1  \right|^p \right] \right)^{1/p}. $$
\end{lemma}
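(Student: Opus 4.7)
The plan is to reduce the inequality to a pointwise bound and then apply Minkowski's inequality in $L^p$. The key observation is that the map $(x,y) \mapsto x \vee y$ is $1$-Lipschitz in each coordinate, so by the triangle inequality applied coordinate by coordinate one obtains
$$|X_1 \vee X_2 - Y_1 \vee Y_2| \leq |X_1 \vee X_2 - Y_1 \vee X_2| + |Y_1 \vee X_2 - Y_1 \vee Y_2| \leq |X_1 - Y_1| + |X_2 - Y_2|$$
almost surely.

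Next, since $X_2$ and $Y_2$ are nonnegative, I would bound $|X_2 - Y_2| \leq X_2 + Y_2$. Combining this with the previous step yields the pointwise estimate
$$|X_1 \vee X_2 - Y_1 \vee Y_2| \leq |X_1 - Y_1| + X_2 + Y_2.$$

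Finally, I would take $L^p$ norms on both sides and apply Minkowski's inequality (valid for $p \geq 1$) to distribute the norm across the three summands on the right, obtaining
$$\bigl( E\bigl[ |X_1 \vee X_2 - Y_1 \vee Y_2|^p \bigr] \bigr)^{1/p} \leq \bigl( E[|X_1-Y_1|^p] \bigr)^{1/p} + \bigl( E[X_2^p] \bigr)^{1/p} + \bigl( E[Y_2^p] \bigr)^{1/p},$$
which is exactly the claim. There is no serious obstacle here; the statement is essentially a repackaging of the Lipschitz property of the maximum together with Minkowski's inequality, and the nonnegativity of $X_2, Y_2$ is used only to replace $|X_2 - Y_2|$ by the symmetric and simpler bound $X_2 + Y_2$, which is presumably the form needed for later applications in the proof of Theorem~\ref{T.Main}.
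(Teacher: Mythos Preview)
Your proof is correct and follows essentially the same approach as the paper: both establish the pointwise bound $|X_1 \vee X_2 - Y_1 \vee Y_2| \leq X_2 + |X_1 - Y_1| + Y_2$ and then apply Minkowski's inequality. The only cosmetic difference is that the paper derives this bound from the identity $x_1 \vee x_2 = (x_2 - x_1)^+ + x_1$ (giving $(x_2-x_1)^+ + (y_2-y_1)^+ + |x_1-y_1|$, then using nonnegativity to drop the positive parts), whereas you use the coordinate-wise $1$-Lipschitz property of the maximum followed by $|X_2 - Y_2| \leq X_2 + Y_2$.
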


\begin{proof}
First note that for any real numbers $x_1, x_2$ we have that
$$x_1 \vee x_2  = (x_2 - x_1)^+ + x_1,$$
from where we obtain that for $y_1, y_2$ also real,
$$|x_1 \vee x_2 - y_1 \vee y_2| \leq  (x_2 - x_1)^+ + |x_1 - y_1| + (y_2 - y_1)^+.$$
Moreover, provided $x_1, y_1, x_2, y_2 \geq 0$, we have that
$$|x_1 \vee x_2 - y_1 \vee y_2| \leq x_2 + |x_1 - y_1| + y_2.$$ 
Substituting in the random variables and using Minkowski's inequality gives the result.  
\end{proof}

\bigskip

The following proposition contains the main coupling between the predecessor graph and its weighted branching tree approximation. Its proof relies on the bound provided by Lemma~\ref{L.RedundantLabels}.

\begin{prop} \label{P.GraphTreeCoupling}
Suppose that Assumption \ref{A.Nconditions} is satisfied.  Then, for $\hat \nu_k$, the probability measure of $\bigvee_{r=0}^k \hat U_r$, and $\tilde \nu_k$, the probability measure of $\bigvee_{r=0}^k \tilde U_r$, we have that for any $r_n \to \infty$, and any $p \geq 1$,
$$W_p( \hat \nu_{r_n}, \tilde \nu_{r_n}) \to 0 \qquad n \to \infty.$$
\end{prop}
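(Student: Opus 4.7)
My plan is to construct an explicit coupling of the truncated predecessor graph $\mathcal{G}_n$ (to depth $r_n$) and a truncated weighted branching tree on a common probability space so that their cumulative maxima agree on every root-to-leaf path that avoids the ``merged'' label set $\mathcal{M} := \bigcup_{s \geq 1} M_s$, and then to control the residual contribution using Lemma \ref{L.RedundantLabels} together with Assumption \ref{A.Nconditions}~(iii). Concretely, on a common space I build both processes on the same label set, with the rule that for every ${\bf i} \notin \mathcal{M}$ the tree marks $(\tilde N_{\bf i}, \chi_{\bf i}, \tilde\tau_{\bf i})$ are set equal to the graph marks $(\hat N_{\bf i}, \chi_{\bf i}, \hat\tau_{\bf i})$, whereas for ${\bf i} \in \mathcal{M}$ the tree marks are drawn as fresh independent copies of $(\tilde N, \chi, \tilde\tau)$, independent of everything else. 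The key fact that makes this a genuine branching-tree construction is that, conditionally on $\mathcal{M}$, the graph marks at labels in its complement are i.i.d. $(\tilde N, \chi, \tilde\tau)$ because those labels correspond to distinct jobs with unique immediate predecessors, and the fresh marks at $\mathcal{M}$ fill in the rest without introducing extra dependence. Under this coupling, $\hat S_{\bf j} = \tilde S_{\bf j}$ and $\hat A_r$ and $\tilde A_r$ contain the same labels ${\bf j}$ whose ancestral paths avoid $\mathcal{M}$.

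With the coupling in place, I would apply Lemma \ref{L.MaximumsLp} to the decomposition
$$\bigvee_{r=0}^{r_n} \hat U_r = X_1 \vee X_2, \qquad \bigvee_{r=0}^{r_n} \tilde U_r = Y_1 \vee Y_2,$$
where $X_1$ and $Y_1$ denote the max of $0$ and of the $\hat S$'s (resp.\ $\tilde S$'s) over paths avoiding $\mathcal{M}$, and $X_2, Y_2$ are the positive parts of the maxima over paths touching $\mathcal{M}$, set to $0$ when no bad path exists. By construction $X_1 = Y_1$, so Lemma \ref{L.MaximumsLp} gives
$$W_p(\hat\nu_{r_n}, \tilde\nu_{r_n}) \leq \bigl(E[X_2^p]\bigr)^{1/p} + \bigl(E[Y_2^p]\bigr)^{1/p},$$
and it suffices to show each term vanishes as $n \to \infty$.

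To bound $E[X_2^p]$, I would apply the exponential Markov inequality at the rate $\beta > 0$ of Assumption \ref{A.Nconditions}~(ii): for $x \geq 0$,
$$P(X_2 > x) \leq e^{-\beta x}\, E\left[ \sum_{r=1}^{r_n} \sum_{\substack{{\bf i} \in \hat A_r \\ \exists\, k \leq r:\, {\bf i}|k \in \mathcal{M}}} e^{\beta \hat S_{\bf i}} \right].$$
Partitioning summands by the first ancestor ${\bf j}$ of ${\bf i}$ that lies in $\mathcal{M}$, and applying the stochastic domination and branching identity of \eqref{eq:ExpectationBound}--\eqref{eq:BranchingExpectation} to the subtree below ${\bf j}$, the expectation on the right is bounded by
$$\frac{1}{1 - \tilde\rho_\beta}\sum_{k=1}^{r_n} E\left[\sum_{{\bf j} \in M_k} e^{\beta \hat S_{\bf j}}\right] \leq \frac{E[\tilde N^2]}{n(1-\tilde\rho_\beta)} \sum_{k=1}^{r_n} k\,\tilde\rho_\beta^{\,k},$$
where the last inequality is Lemma \ref{L.RedundantLabels}. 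By \eqref{eq:LimitingRho}, $\tilde\rho_\beta \to \rho_\beta < 1$, so $\sum_{k \geq 1} k\,\tilde\rho_\beta^{\,k}$ is uniformly bounded in large $n$, and Assumption \ref{A.Nconditions}~(iii) gives $E[\tilde N^2]/n \to 0$. Calling the resulting bound $A_n = o(1)$, the identity $E[X_2^p] = \int_0^\infty p x^{p-1} P(X_2 > x)\,dx$ together with $P(X_2 > x) \leq \min(1, A_n e^{-\beta x})$ yields $E[X_2^p] = O(A_n) \to 0$. The bound for $E[Y_2^p]$ follows by the identical argument on the tree side, using $\tilde S_{\bf i}$ and the fresh marks at $\mathcal{M}$; the random set $\mathcal{M}$ is inherited from the graph, so the same Lemma \ref{L.RedundantLabels} estimate applies.

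The main technical obstacle is verifying that the first-step coupling is well-posed, i.e., that keeping the graph marks on the complement of $\mathcal{M}$ and substituting fresh independent copies on $\mathcal{M}$ yields a bona fide weighted branching tree with marks unconditionally i.i.d. $(\tilde N, \chi, \tilde\tau)$; this reduces to checking the conditional i.i.d.~claim for the graph marks given $\mathcal{M}$, which relies on the fact that labels outside $\mathcal{M}$ correspond to distinct jobs whose inter-arrival times are exponential with rate $\lambda_n^*$. A secondary subtlety is the dependence-breaking in the Markov step: the subtree sum below a first-bad ancestor ${\bf j}$ must be majorized by the unconditional branching expectation $(1-\tilde\rho_\beta)^{-1}$, which is achieved by the same inter-arrival replacement argument used in \eqref{eq:ExpectationBound}.
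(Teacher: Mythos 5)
Your proposal matches the paper's proof essentially step for step: the same coupling (graph marks retained off the merged label set, fresh i.i.d.\ marks substituted on it, so the clean parts of the two maxima coincide), the same split handled by Lemma~\ref{L.MaximumsLp}, and the same exponential Chernoff bound combined with Lemma~\ref{L.RedundantLabels} and Assumption~\ref{A.Nconditions}~(iii) to show the merged-path contribution is $O\left(E[\tilde N^2]/n\right) \to 0$. The only cosmetic difference is your first-bad-ancestor partition with the $(1-\tilde\rho_\beta)^{-1}$ subtree factor, where the paper sums the Lemma~\ref{L.RedundantLabels} bound directly over levels $r \le r_n$ (its set $M_r$ already contains all descendants of merge points); both yield the same estimate.
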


\begin{proof}
From the definition of the Wasserstein metric, we need to construct a coupling of $\hat \nu_{r_n}$ and $\tilde \nu_{r_n}$ for which we can show that their $L_p$ distance converges to zero. We will do this by defining  a weighted branching tree  that will be very close to the predecessor graph restricted to predecessors at graph distance at most $r_n$ of the tagged job (i.e., whose labels are of the form ${\bf i} = (i_1, \dots, i_r)$ with $r \leq r_n$). 
To start, define $M_r = \{ {\bf i} \in \hat A_r: {\bf i}\sim {\bf j} \text{ for some } {\bf j} \prec {\bf i} \}$ as in Lemma \ref{L.RedundantLabels}.  To construct the weighed branching tree we proceed inductively starting from the tagged job. 

Let $\{ \tilde N_{\bf i}'  \}_{{\bf i} \in U}$ be a sequence of i.i.d. copies of $\tilde N$, let $\{ \chi_{\bf i}' \}_{{\bf i} \in U}$ be a sequence of i.i.d. copies of $\chi$, and let $\{ \tilde \tau_{\bf i}' \}_{{\bf i} \in U}$ be a sequence of i.i.d. exponential random variables with rate $\lambda_n^*$, all sequences independent of each other and of all other random variables used up to now. Next, let $\tilde A_0 = \{ \emptyset\} = \hat A_0$ and $\tilde N_\emptyset \equiv \hat N_\emptyset$, which defines $\tilde A_1 = \{ i: 1 \leq i \leq \tilde N_\emptyset\}$. In general, for $r \geq 1$ and each ${\bf i} \in \tilde A_r$, set 
\begin{align*}
\tilde N_{\bf i} &= \begin{cases} 
\hat N_{\bf i}, &  \text{if } {\bf i} \in M_{r}^c , \\
\tilde N_{\bf i}' , & \text{otherwise}, \end{cases} 
\qquad \text{and} \qquad 
\tilde X_{\bf i} = \begin{cases}
\hat X_{\bf i}, &  \text{if } {\bf i} \in M_r^c, , \\
\chi_{\bf i}' - \tilde \tau_{\bf i}', &  \text{otherwise}. \end{cases}
\end{align*}
Then, use the newly defined $\{\tilde N_{\bf i} \}_{{\bf i} \in \tilde A_r}$ to construct $\tilde A_{r+1} = \{ ({\bf i}, i_{r+1}): {\bf i} \in \tilde A_r, \, 1 \leq i_{r+1} \leq \tilde N_{\bf i} \}$. We point out that, by construction, $M_r^c \subseteq \hat A_r \cap \tilde A_r$.  Also, the $\{ (\tilde N_{\bf i}, \tilde X_{({\bf i},1)}, \dots, \tilde X_{({\bf i}, \tilde N_{\bf i})} ) \}_{{\bf i} \in U}$ are now i.i.d. with the same distribution as $(\tilde N, \tilde X_1, \dots, \tilde X_{\tilde N})$, and therefore define a weighted branching tree.

Recall from the beginning of the section that
$$\tilde S_\emptyset = 0, \qquad \tilde S_{\bf i} = \tilde X_{{\bf i}|1} + \tilde X_{{\bf i}|2} + \dots + \tilde X_{\bf i}, \quad {\bf i} \neq \emptyset,$$
and 
$$\tilde U_r = \bigvee_{{\bf i} \in \tilde A_r} \tilde S_{\bf i}.$$
We will show that $E\left[ \left| \bigvee_{r=0}^{r_n} \hat U_r - \bigvee_{r=0}^{r_n} \tilde U_r \right|^p \right] \to 0$ as $n \to \infty$. 

To this end, note that we can split the paths in $\hat U_r$ and $\tilde U_r$ as follows:
$$\hat U_r = \max\left\{ \bigvee_{{\bf i} \in M_r^c} \hat S_{\bf i}, \, \bigvee_{{\bf i} \in  M_r} \hat S_{\bf i}^+ \right\} \triangleq \max\left\{ \hat U_r^{(1)}, \, \hat U_r^{(2)} \right\},$$
and
$$\tilde U_r = \max\left\{ \bigvee_{{\bf i} \in \tilde A_r \cap M_r^c} \tilde S_{\bf i}, \, \bigvee_{{\bf i} \in \tilde A_r \cap M_r} \tilde S_{\bf i}^+ \right\} \triangleq \max\left\{ \tilde U_r^{(1)}, \, \tilde U_r^{(2)} \right\}.$$
Note that $\emptyset \in \hat A_r \cap M_r^c$, and therefore $\hat U_r^{(1)}$ and $\tilde U_r^{(1)}$ are nonnegative without having to add the positive parts to the corresponding $\hat S_{\bf i}$ and $\tilde S_{\bf i}$. Moreover, $\hat S_{\bf i} \equiv \tilde S_{\bf i}$ for ${\bf i} \in  M_r^c$ and therefore, $\hat U_r^{(1)} \equiv \tilde U_r^{(1)}$. It follows from Lemma~\ref{L.MaximumsLp} that
\begin{align*}
E\left[ \left| \bigvee_{r=0}^{r_n} \hat U_r - \bigvee_{r=0}^{r_n} \tilde U_r \right|^p \right] &= E\left[ \left| \max\left\{ \bigvee_{r=0}^{r_n} \hat U_r^{(1)}, \,  \bigvee_{r=0}^{r_n} \hat U_r^{(2)} \right\} - \max\left\{ \bigvee_{r=0}^{r_n} \tilde U_r^{(1)}, \,  \bigvee_{r=0}^{r_n} \tilde U_r^{(2)} \right\}  \right|^p \right] \\
&\leq \left( \left( E\left[ \left( \bigvee_{r=0}^{r_n} \hat U_r^{(2)}  \right)^p \right]  \right)^{1/p} + \left( E\left[ \left( \bigvee_{r=0}^{r_n} \tilde U_r^{(2)}  \right)^p \right]  \right)^{1/p}   \right)^p.
\end{align*}

To analyze the last two expectations we follow the same approach used in the proof of Lemma~\ref{L.LpConvergence} to obtain
\begin{align*}
E\left[ \left( \bigvee_{r=0}^{r_n} \hat U_r^{(2)}  \right)^p \right] \leq C_{\beta,p} \, \sum_{r=1}^{r_n} E\left[ \sum_{{\bf i} \in M_r} e^{\beta \hat S_{\bf i}} \right]  ,
\end{align*}
where $C_{\beta,p} = p E[\xi^{p-1}]/\beta^p$ with $\xi$ exponentially distributed with rate one, and $\beta > 0$ is the one from Assumption \ref{A.Nconditions} (ii). By Lemma \ref{L.RedundantLabels} we have that
$$E\left[ \sum_{{\bf i} \in M_r} e^{\beta \hat S_{\bf i}} \right] \leq \frac{r}{n}  E[\tilde N^2] \left( E[\tilde N] E\left[ e^{\beta (\chi - \tilde \tau)} \right] \right)^r.$$

By \eqref{eq:LimitingRho} we have that $E[\tilde N] E\left[ e^{\beta (\chi - \tilde \tau)} \right] \to \rho_\beta$ as $n \to \infty$. We conclude that, for any $0 < \epsilon < 1 - \rho_\beta$ and sufficiently large $n$, 
\begin{align*}
E\left[ \left( \bigvee_{r=0}^{r_n} \hat U_r^{(2)}  \right)^p \right] &\leq C_{\beta,p}  \sum_{r=1}^{r_n} \frac{r}{n}  E\left[ \tilde N^2  \right]  \left( E[\tilde N] E\left[ e^{\beta (\chi - \tilde \tau)} \right] \right)^{r} \\
&\leq \frac{C_{\beta,p} }{n} E\left[ \tilde N^2  \right]  \sum_{r=1}^{r_n} r ( \rho_\beta + \epsilon)^{r} = O\left( \frac{1}{n} E\left[ \tilde N^2 \right] \right).
\end{align*}

The proof that $E\left[ \left( \bigvee_{r=0}^{r_n} \tilde U_r^{(2)}  \right)^p \right] = O\left( E\left[ \tilde N^2 \right] n^{-1} \right)$ follows the same steps and is therefore omitted. We have thus shown that
$$E\left[ \left| \bigvee_{r=0}^{r_n} \hat U_r - \bigvee_{r=0}^{r_n} \tilde U_r \right|^p \right]  = O\left( \frac{1}{n} E\left[ \tilde N^2  \right] \right)$$
as $n \to \infty$, which in turn implies that $W_p( \hat \nu_{r_n}, \tilde \nu_{r_n}) \to 0$ by Assumption \ref{A.Nconditions} (iii). 
\end{proof}

%

The following, and last, preliminary result provides a coupling for two weighted branching trees. As pointed out earlier, this step is unnecessary if $f_n \equiv f$ for all $n$ sufficiently large. 

\begin{prop} \label{P.CouplingTwoTrees}
Suppose that Assumption \ref{A.Nconditions} (i)-(ii) is satisfied. Then, for $\tilde \nu_k$, the probability measure of $\bigvee_{r=0}^k \tilde U_r$, $\nu_k$, the probability measure of $\bigvee_{r=0}^k U_r$,  any $p \geq 1$ and any $r_n \to \infty$ such that 
$$\lim_{n \to \infty} \left|E[N] - E[\tilde N] \right| (E[N])^{r_n/p} \, r_n = 0,$$
we have that
$$W_p( \tilde \nu_{r_n}, \nu_{r_n}) \to 0 \qquad n \to \infty.$$
\end{prop}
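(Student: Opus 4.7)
The plan is to construct an explicit joint coupling of the two weighted branching trees and then split each tree into a ``good'' part, where the coupling of the offspring counts succeeded at every ancestor along the path, and a ``bad'' part, where it did not. At each node ${\bf i} \in U$, I would draw $(\tilde N_{\bf i}, N_{\bf i})$ from a maximal coupling of $f_n$ and $f$, use a common service requirement $\chi_{({\bf i},j)} \sim B$ in both trees, and couple $(\tilde \tau_{({\bf i},j)}, \tau_{({\bf i},j)})$ monotonically from a single uniform $U_{({\bf i},j)}$ via $\tilde \tau = -(\log U)/\lambda_n^*$ and $\tau = -(\log U)/\lambda^*$. Since $f_n \Rightarrow f$ on $\mathbb{N}_+$, Scheff\'e's lemma gives $\delta_n \triangleq d_{TV}(f_n, f) \to 0$, while the monotone coupling yields $\tilde\tau - \tau = a_n \tau$ with $a_n = \lambda^*/\lambda_n^* - 1 \to 0$ and $|a_n| = O(|E[N] - E[\tilde N]|)$.

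Let $\epsilon_{\bf j} = 1(\tilde N_{\bf j} \neq N_{\bf j})$ and define the good set at level $r$ by $G_r = \{ {\bf i} \in \mathbb{N}_+^r : i_{k+1} \leq \tilde N_{{\bf i}|k} = N_{{\bf i}|k} \text{ for all } 0 \leq k < r \} \subseteq \tilde A_r \cap A_r$, together with $\tilde B_r = \tilde A_r \setminus G_r$ and $B_r = A_r \setminus G_r$. Writing $X_1 = \left( \bigvee_{r \leq r_n} \bigvee_{{\bf i} \in G_r} \tilde S_{\bf i} \right)^+$, $X_2 = \left( \bigvee_{r \leq r_n} \bigvee_{{\bf i} \in \tilde B_r} \tilde S_{\bf i} \right)^+$, and defining $Y_1, Y_2$ analogously with $S_{\bf i}$, one has $\bigvee_{r=0}^{r_n} \tilde U_r = \max\{X_1, X_2\}$ and $\bigvee_{r=0}^{r_n} U_r = \max\{Y_1, Y_2\}$. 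Applying Lemma~\ref{L.MaximumsLp} reduces the Wasserstein bound to showing that $\|X_2\|_p$, $\|Y_2\|_p$ and $\|X_1 - Y_1\|_p$ all tend to zero.

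For the bad-path terms, I would mimic the exponential-Markov estimate already used in Lemma~\ref{L.LpConvergence} to reduce $\|X_2\|_p^p$ to $C_{\beta,p} \sum_{r=1}^{r_n} E\left[\sum_{{\bf i} \in \tilde B_r} e^{\beta \tilde S_{\bf i}} \right]$, with $\beta$ from Assumption~\ref{A.Nconditions}(ii). Using $1({\bf i} \in \tilde B_r) \leq \sum_{s=0}^{r-1} 1({\bf i} \in \tilde A_r, \epsilon_{{\bf i}|s} = 1)$ and the independence of the mark vectors across distinct nodes to factor the inner expectation, the contribution of each $s$ works out to $(E[e^{\beta(\chi-\tilde\tau)}])^r (E[\tilde N])^{r-1} \eta_n$, where $\eta_n \triangleq E[\tilde N 1(\epsilon=1)] = \sum_k k (f_n(k) - (f_n \wedge f)(k))$. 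By Scheff\'e combined with the uniform integrability of $f_n$, $\eta_n \to 0$, and since $\tilde \rho_\beta = E[\tilde N] E[e^{\beta(\chi - \tilde \tau)}] \to \rho_\beta < 1$ by~\eqref{eq:LimitingRho}, summing in $r$ gives $\|X_2\|_p^p = O(\eta_n) \to 0$; the bound for $\|Y_2\|_p$ is identical.

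For the good-path term, the coupling forces $\tilde S_{\bf i} - S_{\bf i} = -a_n \sum_{k=1}^r \tau_{{\bf i}|k}$ on $G_r$, so $|X_1 - Y_1| \leq |a_n| \bigvee_{r \leq r_n} \bigvee_{{\bf i} \in A_r} \sum_{k=1}^r \tau_{{\bf i}|k} \leq |a_n| \sum_{k=1}^{r_n} \max_{{\bf j} \in A_k} \tau_{\bf j}$. Combining Minkowski's inequality with the crude bound $E[(\max_{{\bf j} \in A_k} \tau_{\bf j})^p] \leq E\left[\sum_{{\bf j} \in A_k} \tau_{\bf j}^p\right] = (E[N])^k E[\tau^p]$ yields $\|X_1 - Y_1\|_p = O\bigl(|a_n| r_n (E[N])^{r_n/p}\bigr)$, which vanishes precisely under the hypothesized rate condition. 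The main obstacle lies here: because the tree contains roughly $(E[N])^{r_n}$ paths of length $r_n$, the accumulated coupling error could blow up unless $r_n$ grows slowly enough relative to the rate at which $E[\tilde N] \to E[N]$, and the rate condition on $r_n$ is calibrated so that the simple level-by-level maximum bound is enough.
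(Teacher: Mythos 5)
Your proposal is correct and follows essentially the same route as the paper's proof: the same good/bad path decomposition keyed to the first miscoupling of the offspring variables, the same reduction via Lemma~\ref{L.MaximumsLp}, the same exponential Chernoff-type bound yielding an $O(r\,\tilde\rho_\beta^{\,r-1}\,\eta_n)$ contribution from miscoupled paths, and the same $O(|a_n|\,r_n\,(E[N])^{r_n/p})$ control of the accumulated $\tau$-discrepancy on the coupled paths. The only (immaterial) deviations are that you use a maximal coupling of $f_n$ and $f$ with Scheff\'e plus uniform integrability to get $\eta_n\to 0$ where the paper uses the quantile coupling and $W_1$-convergence, you obtain the bad-path bound by a union over the first miscoupling level rather than the paper's level-by-level recursion, and you bound the good-path error by Minkowski over levels rather than via the Erlang moment computation -- all of which produce the same estimates.
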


\begin{proof}
We start by constructing a coupling of $\tilde \nu_{r_n}$ and $\nu_{r_n}$. Let $\{ \xi_{\bf i}\}_{{\bf i} \in U}$ and $\{ \zeta_{\bf i} \}_{{\bf i} \in U}$ be two independent sequences of i.i.d.~Uniform(0,1) random variables. Let $F_n(k) = \sum_{j=1}^k f_n(j)$, $F(k) = \sum_{j=1}^k f(j)$ and set
\begin{align*}
\tilde \tau_{\bf i} = -\frac{1}{\lambda_n^*} \log \xi_{\bf i} \qquad \text{and} \qquad \tau_{\bf i} = - \frac{1}{\lambda^*} \log \xi_{\bf i}, \\
\tilde N_{\bf i} = F_n^{-1}(\zeta_{\bf i}) \qquad \text{and} \qquad N_{\bf i} = F^{-1}(\zeta_{\bf i}),
\end{align*}
where $g^{-1}(t) = \inf\{ x \in \mathbb{R}: g(x) \geq t\}$ (this is the standard inverse transform construction). 

Now use the $\{\tilde N_{\bf i} \}$ and the $\{ N_{\bf i} \}$ to construct two branching trees according to $\tilde A_0 = \{ \emptyset \} = A_0$ and $\tilde A_r = \{ ({\bf i}, i_r): {\bf i} \in \tilde A_{r-1}, 1 \leq i_r \leq \tilde N_{\bf i} \}$, $A_r = \{ ({\bf i}, i_r): {\bf i} \in A_{r-1}, 1 \leq i_r \leq N_{\bf i} \}$ for $r \geq 1$. It remains to construct the sequences of service requirements. For the $\sim$ weighted branching tree we sample i.i.d. random variables $\{ \chi_{\bf i} \}_{{\bf i} \in U}$ having distribution $B$, independent of all other random variables, and set $\tilde X_{\bf i} = \chi_{\bf i} - \tilde \tau_{\bf i}$ for each ${\bf i} \in U$. Then let $X_{\bf i} = \chi_{\bf i} - \tau_{\bf i}$. 

To analyze the difference between the corresponding processes $\tilde U_r$ and $U_r$, we start by defining the notion of a miscoupling. We say that there has been a miscoupling at node ${\bf i} \in A_r$ if $N_{\bf i} \neq \tilde N_{\bf i}$ and $N_{{\bf i}|k} = \tilde N_{{\bf i}|k}$ for all $1 \leq k < r$. Next, define $C_r = \{ {\bf i} \in A_r: N_{{\bf i}|k} = \tilde N_{{\bf i}|k} \text{ for all } 1 \leq k < r \}$, which corresponds to the set of individuals in both trees that have no miscouplings along their paths. 

Following the same steps used in the proof of Proposition \ref{P.GraphTreeCoupling}, and with some abuse of notation, split the paths in $\tilde U_r$ and $U_r$ as follows:
$$\tilde U_r = \max\left\{ \bigvee_{{\bf i} \in \tilde A_r \cap C_r} \tilde S_{\bf i}^+, \, \bigvee_{{\bf i} \in \tilde A_r \cap C_r^c} \tilde S_{\bf i}^+ \right\} \triangleq \max\left\{ \tilde U_r^{(1)}, \, \tilde U_r^{(2)} \right\},$$
and
$$U_r = \max\left\{ \bigvee_{{\bf i} \in A_r \cap C_r} S_{\bf i}^+, \, \bigvee_{{\bf i} \in  A_r \cap C_r^c} S_{\bf i}^+ \right\} \triangleq \max\left\{  U_r^{(1)}, \, U_r^{(2)} \right\}.$$
By using Lemma \ref{L.MaximumsLp} we obtain
\begin{align}
E\left[ \left| \bigvee_{r=0}^{r_n} \tilde U_r - \bigvee_{r=0}^{r_n} U_r \right|^p \right] &= E\left[ \left| \max\left\{ \bigvee_{r=0}^{r_n} \tilde U_r^{(1)}, \,  \bigvee_{r=0}^{r_n} \tilde U_r^{(2)} \right\} - \max\left\{ \bigvee_{r=0}^{r_n}  U_r^{(1)}, \,  \bigvee_{r=0}^{r_n}  U_r^{(2)} \right\}  \right|^p \right] \notag \\
&\leq \left\{ \left( E\left[ \left( \bigvee_{r=0}^{r_n} \tilde U_r^{(2)}  \right)^p \right]  \right)^{1/p} +  \left( E\left[ \left( \bigvee_{r=0}^{r_n} U_r^{(2)}  \right)^p \right]  \right)^{1/p}   \right. \label{eq:Complements} \\
&\hspace{5mm} +\left.  \left( E\left[ \left| \bigvee_{r=0}^{r_n} \tilde U_r^{(1)} - \bigvee_{r=0}^{r_n}  U_r^{(1)}  \right|^p \right]  \right)^{1/p}  \right\}^p. \label{eq:Intersection}
\end{align}

The analysis of the two expectations in \eqref{eq:Complements} is very similar to the approach used in the proof of Proposition \ref{P.GraphTreeCoupling}, so we will skip many of the intermediate steps. First, we obtain that
$$E\left[ \left( \bigvee_{r=0}^{r_n} \tilde U_r^{(2)}  \right)^p \right] \leq C_{\beta,p}  \sum_{r=1}^{r_n} E\left[ \sum_{{\bf i} \in \tilde A_r \cap C_r^c} e^{\beta \tilde S_{\bf i}} \right] ,$$
where $C_{\beta,p}$ is a finite constant and $\beta > 0$ is the same one from Assumption \ref{A.Nconditions} (ii). To compute the expectation on the right hand side let $\mathcal{H}_k = \sigma( (\tilde N_{\bf i}, \tilde X_{({\bf i},1)}, \dots, \tilde X_{({\bf i}, \tilde N_{\bf i})}): {\bf i} \in \tilde A_s, \, 0 \leq s < k)$ for $k \geq 1$ and note that for $r \geq 2$,
\begin{align*}
a_r &\triangleq E\left[ \sum_{{\bf i} \in \tilde A_r \cap C_r^c} e^{\beta \tilde S_{\bf i}} \right] \\
&= E\left[ \sum_{{\bf i} \in \tilde A_{r-1} \cap C_{r-1}^c} \sum_{j=1}^{\tilde N_{\bf i}} e^{\beta \tilde S_{({\bf i},j)} }  + \sum_{{\bf i} \in \tilde A_{r-1} \cap C_{r-1}} \sum_{j=1}^{\tilde N_{\bf i}} e^{\beta \tilde S_{({\bf i},j)} } 1(\tilde N_{\bf i} \neq N_{\bf i}) \right] \\
&= E\left[ \sum_{{\bf i} \in \tilde A_{r-1} \cap C_{r-1}^c} e^{\beta \tilde S_{\bf i}} E\left[ \left. \sum_{j=1}^{\tilde N_{\bf i}} e^{\beta \tilde X_{({\bf i},j)} } \right| \mathcal{H}_{r-1} \right] \right] \\
&\hspace{5mm} + E \left[ \sum_{{\bf i} \in \tilde A_{r-1} \cap C_{r-1}} e^{\beta \tilde S_{\bf i}} E\left[ \left. \sum_{j=1}^{\tilde N_{\bf i}} e^{\beta \tilde X_{({\bf i},j)} } 1(\tilde N_{\bf i} \neq N_{\bf i}) \right| \mathcal{H}_{r-1} \right] \right] \\
&= \tilde \rho_\beta \, a_{r-1} + E\left[ \tilde N 1(\tilde N \neq N) \right] E\left[ e^{\beta (\chi - \tilde \tau)} \right] E \left[ \sum_{{\bf i} \in \tilde A_{r-1} \cap C_{r-1}} e^{\beta \tilde S_{\bf i}} \right] ,
\end{align*}
where $\tilde \rho_\beta = E[\tilde N] E\left[ e^{\beta (\chi -  \tilde \tau)} \right]$ and $(\tilde N, N) = (F_n^{-1}(\zeta), F^{-1}(\zeta))$  in the first of the last three expectations, with $\zeta$ Uniform(0,1). Letting $\mathcal{E}_n =E\left[ \tilde N 1(\tilde N \neq N) \right] E\left[ e^{\beta (\chi - \tilde \tau)} \right]$ and noting that
$$E \left[ \sum_{{\bf i} \in \tilde A_{r-1} \cap C_{r-1}} e^{\beta \tilde S_{\bf i}} \right] \leq E \left[ \sum_{{\bf i} \in \tilde A_{r-1}} e^{\beta \tilde S_{\bf i}} \right] = (\tilde \rho_\beta)^{r-1},$$
gives
$$a_r \leq \tilde \rho_\beta \, a_{r-1} + \mathcal{E}_n (\tilde \rho_\beta)^{r-1}.$$
Iterating this recursion $r-1$ times gives,
$$a_r \leq (\tilde \rho_\beta)^{r-1} a_{1} + (r-1) \mathcal{E}_n (\tilde \rho_\beta)^{r-1} =  \mathcal{E}_n r  (\tilde \rho_\beta)^{r-1}.$$
Since by \eqref{eq:LimitingRho} we have that $\tilde \rho_\beta \to \rho_\beta = E[N] E\left[ e^{\beta (\chi - \tau)} \right]$ as $n \to \infty$, then for $0 < \epsilon < 1-\rho_\beta$ and $n$ sufficiently large,
$$E\left[ \left( \bigvee_{r=0}^{r_n} \tilde U_r^{(2)}  \right)^p \right] \leq C_{\beta,p} \, \mathcal{E}_n \sum_{r=1}^{r_n}  r (\tilde \rho_\beta)^{r-1} \leq C_{\beta,p} \, \mathcal{E}_n \sum_{r=1}^{\infty}  r (\rho_\beta + \epsilon)^{r-1} = O\left( \mathcal{E}_n \right)$$
as $n \to \infty$. The proof for the expectation involving the $\{ U_r^{(2)}\}$ is symmetric with respect to the $^\sim$ notation, so we obtain
\begin{equation} \label{eq:MiscoupledPaths}
E\left[ \left( \bigvee_{r=0}^{r_n} \tilde U_r^{(2)}  \right)^p \right] + E\left[ \left( \bigvee_{r=0}^{r_n}  U_r^{(2)}  \right)^p \right] = O\left( \mathcal{E}_n \right).
\end{equation}
It remains to analyze the expectation in \eqref{eq:Intersection}.

The key idea to do this is to note that
$$\tilde U_r^{(1)} =  \max_{{\bf j} \in C_r} (S_{\bf j} + E_{\bf j}),$$
where 
$$E_{\bf j} = \left( 1 - \frac{\lambda^*}{\lambda_n^*} \right) \left( \tau_{{\bf j}|1} + \tau_{{\bf j}|2} + \dots + \tau_{{\bf j}} \right).$$
By using the inequality 
$$\left| \max_{1 \leq i \leq k} (x_i+y_i) - \max_{1 \leq i \leq k} x_i \right| \leq \max_{1\leq i \leq k} |y_i|,$$
for any sequences of real numbers $\{x_i\}_{i \geq 1}$ and $\{y_i \}_{i \geq 1}$ and any $k \geq 1$, we obtain
$$E\left[ \left| \bigvee_{r=0}^{r_n} \tilde U_r^{(1)} - \bigvee_{r=0}^{r_n}  U_r^{(1)}  \right|^p \right] \leq  E\left[ \bigvee_{r=0}^{r_n} \bigvee_{{\bf i} \in C_r} |E_{\bf i}|^p  \right] \leq E\left[ \bigvee_{{\bf i} \in A_{r_n}} |E_{\bf i}|^p  \right],$$
where for the last identity we used the observation that if ${\bf i} \in C_r$ for some $r \leq r_n$, then there is at least one ${\bf j} \in A_{r_n}$ such that $({\bf j} | r) = {\bf i}$ (recall that $f_n(0) = f(0) = 0$), and since all the $\{\tau_{\bf i}\}$ are nonnegative, $|E_{\bf i}| \leq |E_{\bf j}|$. To estimate the last expectation note that since the $\{\tau_{\bf i}\}_{{\bf i} \in U}$ are independent of the $\{ N_{\bf i} \}_{{\bf i} \in U}$, we have
$$E\left[ \bigvee_{{\bf i} \in  A_{r_n}} |E_{\bf i}|^p  \right] \leq E\left[ \sum_{{\bf i} \in  A_{r_n}} |E_{\bf i}|^p  \right] = E[| A_{r_n}| ] \left| \frac{1}{\lambda^*} - \frac{1}{\lambda_n^*} \right|^p E[ Y_{r_n}^p ],$$
where $Y_{r_n}$ is an Erlang random variable with parameters $(r_n, 1)$. Since $E[| A_{r_n}| ] = (E[N])^{r_n}$,
$$ \left| \frac{1}{\lambda^*} - \frac{1}{\lambda_n^*} \right| = \left| \frac{E[N]  - E[\tilde N]}{\lambda E[N] E[\hat N]} \right| \leq \frac{|E[N] - E[\tilde N]|}{\lambda}, $$
and 
$$E[ Y_{r_n}^p ] = \int_0^\infty \frac{x^{r_n + p -1} e^{-x}}{(r_n-1)!} dx = \frac{\Gamma(r_n+p)}{\Gamma(r_n)},$$
where $\Gamma(t)$ is the gamma function, we have that
\begin{align}
E\left[ \left| \bigvee_{r=0}^{r_n} \tilde U_r^{(1)} - \bigvee_{r=0}^{r_n}  U_r^{(1)}  \right|^p \right] &\leq \frac{|E[N] - E[\tilde N]|^p}{\lambda^p} (E[N])^{r_n} \frac{\Gamma(r_n+p)}{\Gamma(r_n)} \notag \\
&= O\left( \left|E[N] - E[\tilde N] \right|^p (E[N])^{r_n} \, r_n^p \right) \label{eq:piece3}
\end{align}
as $n \to \infty$, where in the last step we used that $\lim_{k \to \infty} \Gamma(k) k^\alpha/\Gamma(k+\alpha) = 1$ for any $\alpha \in \mathbb{R}$. 

Combining \eqref{eq:MiscoupledPaths} and \eqref{eq:piece3} with \eqref{eq:Complements} and \eqref{eq:Intersection}, gives
$$\left( E\left[ \left| \bigvee_{r=0}^{r_n} \tilde U_r - \bigvee_{r=0}^{r_n} U_r \right|^p \right] \right)^{1/p} = O\left( \mathcal{E}_n^{1/p} + \left|E[N] - E[\tilde N] \right| (E[N])^{r_n/p} \, r_n    \right),$$
where
$$\left|E[N] - E[\tilde N] \right| (E[N])^{r_n/p} \, r_n \to 0$$
as $n \to \infty$ by assumption. To see that $\mathcal{E}_n \to 0$ as well, note that
$$\mathcal{E}_n =  \frac{\tilde \rho_\beta}{E[\tilde N]} \sum_{k=1}^{m_n} k P(N \neq k | \tilde N = k) f_n(k) \leq \tilde \rho_\beta \max_{1\leq k \leq m_n} P(N \neq k| \tilde N=k) .$$

Since $\tilde \rho_\beta \to \rho_\beta < 1$, it only remains to show that the maximum on the right hand side converges to zero. To see this is the case let $R_k = \{ u \in (0,1): F_n^{-1}(u) = k\}$ and note that for any $1 \leq k \leq m_n$,
\begin{align*}
P(N \neq k | \tilde N = k) &= \int_{R_k} \left\{ 1( F^{-1}(u) \leq k-1) + 1(F^{-1}(u) \geq k+1) \right\} du \\
&= \int_{R_k} \left\{ 1(F^{-1}(u) \leq F_n^{-1}(u) - 1) + 1(F^{-1}(u) \geq F_n^{-1}(u) + 1) \right\} du \\
&= \int_{R_k} 1( |F^{-1}(u) - F_n^{-1}(u)| \geq 1) \, du \\
&\leq \int_0^1 | F^{-1}(u) - F_n^{-1}(u)| \, du, 
\end{align*}
where the last integral is the Wasserstein distance of order one ($W_1$) between distributions $f_n$ and $f$ (see, e.g., \cite{Kan_Rub_58, Ruschendorf_85}), which converges to zero since $f_n \Rightarrow f$ and $E[\tilde N] \to E[N]$ (by Assumption~\ref{A.Nconditions} (i)), which is equivalent to convergence in $W_1$ (see Theorem 6.8 in \cite{Villani_2009}). This completes the proof. 
\end{proof}

\bigskip

Now that we have all the convergence results for each of the pairs of probability measures involved in \eqref{eq:TriangleIneq}, we can give the proof of the many servers limit of Theorem \ref{T.Main}.

\bigskip

\begin{proof}[Proof of Theorem \ref{T.Main} {\bf (Many servers limit)}]
Let $\varphi(n) = \left| E[N] - E[\tilde N] \right|$, which converges to zero as $n \to \infty$ since $f_n$ is uniformly integrable by Assumption \ref{A.Nconditions}. Now let
$$r_n = \begin{cases}
 \frac{p}{2 \log E[N]} |\log \varphi(n)|, & \text{ if } E[N] > 1, \\
\varphi(n)^{-1/2}, & \text{ if } E[N]  = 1,
\end{cases}$$
and note that
$$\left| E[N] - E[\tilde N] \right| (E[N])^{r_n/p} r_n = \begin{cases}
\frac{p}{2\log E[N]} \varphi(n)^{1/2}   |\log \varphi(n)|, & \text{ if } E[N] > 1, \\
\varphi(n)^{1/2}, & \text{ if } E[N] = 1,
\end{cases}$$
which converges to zero as $n \to \infty$ in both cases. 

That $\lim_{n \to \infty} W_p(\mu_n \mu) = 0$ as required is an immediate consequence of \eqref{eq:TriangleIneq} combined with Lemma~\ref{L.LpConvergence}, Proposition~\ref{P.GraphTreeCoupling}, and Proposition~\ref{P.CouplingTwoTrees}. 
\end{proof}

\bigskip

The last proof in the paper is that of Theorem \ref{T.CramerLundberg}. 

\begin{proof}[Proof of Theorem \ref{T.CramerLundberg}]
We need to verify the conditions of Theorem~3.4 in \cite{Jel_Olv_14}. The non-arithmetic condition is immediate from the observation that the $\{\tau_i\}$ are exponentially distributed and independent of $(N, \chi_1, \dots, \chi_N)$. The derivative and root conditions follow from the assumptions by noting that
$$E\left[ \sum_{i=1}^N e^{\theta(\chi_i - \tau_i)} (\chi_i - \tau_i) \right] = \frac{E[N] \lambda^*}{(\lambda^* + \theta)^2} \left( (\lambda^*+\theta) E\left[ e^{\theta \chi} \chi \right] - E\left[ e^{\theta \chi} \right] \right),$$
and
$$E\left[ \sum_{i=1}^N e^{\theta(\chi_i - \tau_i)} \right] = E[N] E\left[ e^{\theta \chi} \right] \lambda^*/(\lambda^*+\theta).$$
To verify condition 1 note that for $\theta > 1$, Lemma 4.1 in \cite{Jel_Olv_12a} gives (using $C_i \equiv 1$ for all $i$),
\begin{align*}
E\left[ \left( \sum_{i=1}^N e^{\chi_i - \tau_i} \right)^\theta \right] &\leq \left( E[ e^{(p-1)(\chi-\tau)} ] \right)^{\theta/(p-1)} E\left[ N^\theta \right] + E\left[ \sum_{i=1}^N e^{\theta(\chi_i - \tau_i)} \right] \\
&\leq E\left[ e^{\theta (\chi-\tau)} \right] E[N^\theta] + 1 \qquad \text{(by Jensen's inequality),}
\end{align*}
where $p = \lceil \theta \rceil$. For $0 < \theta \leq 1$, the same arguments give for any $0 < \epsilon < 1$,  
\begin{align*}
E\left[ \left( \sum_{i=1}^N e^{\theta (\chi_i - \tau_i)/(1+\epsilon)} \right)^{1+\epsilon} \right] &\leq \left( E[ e^{\theta(\chi-\tau)/(1+\epsilon)} ] \right)^{1+\epsilon} E\left[ N^{1+\epsilon} \right] + E\left[ \sum_{i=1}^N e^{\theta(\chi_i - \tau_i)} \right] \\
&\leq E\left[ e^{\theta (\chi-\tau)} \right] E[N^{1+\epsilon}] + 1.
\end{align*}
Since $E[N^{\theta \vee (1+\epsilon)}] < \infty$ by assumption, all the conditions of the theorem are satisfied. 
\end{proof}

\bibliographystyle{plain}
\bibliography{Clouds}

\end{document}